\documentclass[12pt,english,refpage, intoc, refeq]{article}
\usepackage[T1]{fontenc}
\usepackage[latin9]{inputenc}
\usepackage{xcolor,verbatim,amsthm,amsmath,amssymb}
\usepackage{geometry}
\makeatletter

\numberwithin{equation}{section}
\numberwithin{figure}{section}
\theoremstyle{plain}
\newtheorem{thm}{\protect\theoremname}
  \theoremstyle{definition}
  \newtheorem{example}[thm]{\protect\examplename}
  \theoremstyle{definition}
  \newtheorem{defn}[thm]{\protect\definitionname}
  \theoremstyle{plain}
  \newtheorem{lem}[thm]{\protect\lemmaname}
  \theoremstyle{plain}
  \newtheorem{prop}[thm]{\protect\propositionname}

\newcommand{\dint}{\displaystyle\int}
\usepackage{datetime,color,currfile,amssymb,amsmath,pdfsync,hyperref}

\geometry{ hmargin=2.5cm, vmargin=2.5cm }
\date{ }
\makeatother

\usepackage{babel}
  \providecommand{\definitionname}{Definition}
  \providecommand{\examplename}{Example}
  \providecommand{\lemmaname}{Lemma}
  \providecommand{\propositionname}{Proposition}
\providecommand{\theoremname}{Theorem}

\begin{document}

\title{Mixed stochastic differential equations: Existence and uniqueness result}

\author{\textbf{Jos{\'e} Lu{\'\i}s da Silva}\\
CCM, University of Madeira, Campus da Penteada,\\
9020-105 Funchal, Portugal.\\
Email: luis@uma.pt 
\and 
\textbf{Mohamed Erraoui}\\
Universit{\'e} Cadi Ayyad, Facult{\'e} des Sciences Semlalia,\\
 D{\'e}partement de Math{\'e}matiques, B.P. 2390, Marrakech, Maroc\\
Email: erraoui@uca.ma 
\and 
\textbf{El Hassan Essaky}\\
Universit{\'e} Cadi Ayyad, Facult{\'e} Poly-disciplinaire\\
 Laboratoire de Mod{\'e}lisation et Combinatoire\\
D{\'e}partement de Math{\'e}matiques et d'Informatique B.P. 4162,\\
Safi, Maroc.\\  Email: essaky@uca.ma}

\maketitle
\begin{abstract}
In this paper we shall establish an existence and uniqueness result
for solutions of multidimensional, time dependent, stochastic differential
equations driven simultaneously by a multidimensional fractional Brownian
motion with Hurst parameter $H>1/2$ and a multidimensional standard
Brownian motion under a weaker condition than the Lipschitz one. \medskip{}

\noindent \textbf{Keywords}: Fractional Brownian motion, stochastic differential equations, weak and strong solution,
 Bihari's type lemma.
\end{abstract}

\section{Introduction}
The fractional Brownian  motion (fBm for short) $B^{H}=\{B^{H}(t) , t\in [0,T]\}$ with Hurst  parameter $H\in (0,1)$  is a Gaussian self-similar process with stationary increments.
This process was introduced by Kolmogorov \cite{kol} and studied by Mandelbrot and Van Ness in \cite{MN}, where a stochastic integral representation in terms of a standard
Brownian motion (Bm for short) was established. The parameter
$H$ is called Hurst index from the
statistical analysis, developed by the climatologist Hurst \cite{hurst}. The self-similarity and stationary increments properties make the fBm an appropriate model for many applications in diverse fields from biology to finance. From the properties of the fBm it follows that, for every $\alpha >0$
$$
\mathbb{E}\left(|B^H(t)-B^H(s)|^{\alpha}\right) = \mathbb{E}\left(|B^H(1)|^{\alpha}\right)|t-s|^{\alpha H}.
$$
As a consequence of the Kolmogorov continuity theorem, we deduce that there exists a version of the fBm $B^H$ which is a continuous process and whose paths are $\gamma$-H\"{o}lder continuous for every $\gamma <H$. Therefore, the fBm with Hurst parameter $H\neq \frac12$ is not a semimartingale and then the It\^{o} approach to the construction of stochastic integrals with respect to fBm is not valid. Two main approaches have been used in the literature to define stochastic integrals with respect to fBm with Hurst parameter $H$. Pathwise  Riemann-Stieltjes  stochastic integrals can be defined using Young's integral \cite{young} in the case $H>\frac 12$. When $H\in (\frac14, \frac12)$, the rough path analysis introduced by Lyons \cite{lyons} is a suitable method to construct pathwise stochastic integrals.

A second approach to develop a stochastic calculus with respect to the fBm is based on the techniques of Malliavin calculus. The divergence operator, which is the adjoint of the derivative operator, can be regarded as a stochastic integral, which coincides with the limit of Riemann sums constructed using the Wick product.
 This idea has been developed by
Decreusefond and \"{U}st\"{u}nel \cite{DU}, Carmona, Coutin and Montseny \cite{CC}, Al{\`o}s, Mazet and Nualart \cite{AMN1, AMN2}, Al{\`o}s and Nualart \cite{AN} and Hu \cite{hu}, among others.  The integral constructed by this method has zero mean.

Let $T>0$ be a fixed time and $\big(\Omega,\mathcal{F},(\mbox{\ensuremath{\mathcal{F}}}_{t})_{t\in[0,T]},P\big)$
be a given filtered complete probability space with $\left(\mbox{\ensuremath{\mathcal{F}}}_{t}\right)_{t\in[0,T]}$
being a filtration that satisfies the usual hypotheses. 
The aim of this paper is to study
the following stochastic differential equation (SDE for short) on $\mathbb{R}^{n}$
\begin{equation}
X(t)=x_{0}+\int_{0}^{t}b(s,X(s))\, ds+\int_{0}^{t}\sigma_{W}(s,X(s))dW(s)+\int_{0}^{t}\sigma_{H}(s,X(s))dB^{H}(s),\label{eq:1-1}
\end{equation}
where $t\in\left[0,T\right]$, $x_{0}\in\mathbb{R}^{n},$ $W$ is
a $m$-dimensional standard $\mbox{\ensuremath{\mathcal{F}}}_{t}$-Bm
 and $B^{H}$ a $d$-dimensional $\mbox{\ensuremath{\mathcal{F}}}_{t}$-adapted
fBm. The main difficulty when considering Equation~(\ref{eq:1-1}) lies in the fact that both stochastic integrals are dealt in different ways. However, the integral with respect to the Bm is an Itô integral, while
the integral with respect to the fBm has to be understood in the pathwise sense. Mixing the two integrals makes things difficult, forcing to consider very smooth coefficients to prove existence and uniqueness of solution to Equation~(\ref{eq:1-1}). 

It is well known that, under suitable assumptions on the coefficients
$b,\sigma_{W},\sigma_{H}$ (see below), the Equation~(\ref{eq:1-1})
has a unique solution which is $(H-\varepsilon)$-H{\"o}lder continuous,
for all $\varepsilon>0$. This result was first considered in \cite{kubilius}, where unique solvability was proved for time-independent
coefficients and zero drift. Later, in \cite{zahle}, existence
of solution to (\ref{eq:1-1}) was proved under less restrictive assumptions,
but only locally, i.e.~up to a random time. In \cite{GN}, global existence and uniqueness of solution to the Equation~(\ref{eq:1-1})
was established under the assumption that $W$ and $B^{H}$ are independent.
The latter result was obtained in \cite{MS, MS1} without the independence
assumption. We stress on the fact that all these works consider the Lipschitz case. It should be noted,
in addition, that the Lipschitz condition is the most used to
establish the pathwise uniqueness for ordinary and SDEs via the Gronwall lemma. 
Thus, the following question appears naturally: are there any weaker conditions than the
Lipschitz continuity under which the SDE (\ref{eq:1-1}) has a unique strong solution?

In order to answer the above question our approach is to prove that the Euler's polygonal approximations converge uniformly in $t\in [0,T]$, in probability, to a process, which we show to be the strong solution. The basic tools are the pathwise uniqueness for the SDE (\ref{eq:1-1}), tightness of the sequence of the laws of Euler's approximations and the Skorokhod's embedding theorem. It is important to note that the linear growth condition and the continuity of the coefficients are sufficient for the convergence of the Stieltjes and It{\^o} integrals. However, the integral with respect the fBm needs more regularity. To prove the convergence in probability we use an elementary result due to Gyongy and Krylov \cite{GK} which highlights the famous result of Yamada and Watanabe saying that pathwise uniqueness implies uniqueness in law. It is worth mentioning that the pathwise uniqueness property for the SDE (\ref{eq:1-1}) is obtained under weak assumption than the Lipschitz condition. More precisely our conditions are based on the modulus of continuity of the coefficients that achieve pathwise
uniqueness using Bihari's type lemma. 
It should be noted that such conditions are considered by many authors for the existence and uniqueness of solutions of different kind of equations where the Bihari's lemma is the cornerstone in the proof of these results.

The article is organized as follows. In Section~\ref{preliminaries}, we state our assumptions  on the coefficients $b$, $\sigma_W$  and $\sigma_H$ of Equation~\eqref{eq:1-1}, recall briefly the deterministic fractional calculus in order to define the integral with respect to fBm and introduce proper normed spaces. In addition, we give the definition of strong,  weak solution and pathwise uniqueness of Equation~\eqref{eq:1-1}. In Section~\ref{sec:pathunique}, the pathwise uniqueness property for the  solutions of Equation~\eqref{eq:1-1} is proved (see Theorem~\ref{pathunique} below). Finally, in Section~\ref{sec:Euler}, we define the Euler approximations sequence and prove that it is tight.  Moreover, we show that these approximations converge in probability to a process which turns out to be a strong solution of the SDE~\eqref{eq:1-1}, cf.~Theorem~\ref{thm:Eulerstrong} below. In the Appendix, we recall some technical results which play a great
role in this work.  We also show a version of Bihari's lemma which will be used in the proof of pathwise uniqueness to  SDE (\ref{eq:1-1}).

\section{Preliminaries}\label{preliminaries}

Throughout this paper we assume that the coefficients $b, \sigma_W$ and $\sigma_H$, which are continuous, satisfy, for all $x,y\in\mathbb{R}^{n}$ and $t\in\left[0,T\right]$, the
following hypotheses $\mathbf{(H.1)}$  and $\mathbf{(H.2)}$:

\begin{description}
\item [{Hypothesis~$\mathbf{(H.1)}$.}]  The functions $b$ and $\sigma_W$ have a linear growth and  satisfy suitable modulus of continuity with respect to the variable $x$ uniformly in $t$. 
\end{description}
Hypothesis $\mathbf{(H.1)}$ means that $b$ and $\sigma_W$ satisfy
\begin{eqnarray*}
({\mathbf{H.1.1}}) &  & |b(t,x)|\leq K(1+|x|),\\
({\mathbf{H.1.2}}) &  & |b(t,x)-b(t,y)|^{2}\leq\varrho\big(|x-y|^{2}\big)\\
({\mathbf{H.1.3}}) &  & |\sigma_{W}(t,x)|\leq K(1+|x|),\\
({\mathbf{H.1.4}}) &  & |\sigma_{W}(t,x)-\sigma_{W}(t,y)|^{2}\leq\varrho\big(|x-y|^{2}\big),
\end{eqnarray*}
where
$\varrho$ is a concave increasing function from $\mathbb{R}_{+}$
to $\mathbb{R}_{+}$ such that $\varrho(0)=0$, $\varrho(u)>0$ for
$u>0$ and for some $q>1$ we have 
\begin{equation}
\int_{0^{+}}\dfrac{du}{\varrho^{q}(u^{1/q})}=\infty.\label{eq:rho est}
\end{equation}

\begin{description}
\item [{Hypothesis~$\mathbf{(H.2)}$.}]   The function $\sigma_H$ is continuously differentiable in the second variable $x$. Its derivative, with respect to $x$, is bounded, Lipschitz with respect to the same variable uniformly with respect to the first variable $t$. Moreover, both $\sigma_H$ and its derivative are $\beta$-H\"{o}lder with respect to the first variable $t$  uniformly with respect to the second variable.
 \end{description}
 Hypothesis $\mathbf{(H.2)}$ means that $\sigma_H$ and its derivative satisfy
\begin{eqnarray*}
({\mathbf{H.2.1}}) &  & \left|\partial_{x_{i}}\sigma_{H}(t,x)\right|\leq K \\
({\mathbf{H.2.2}}) &  & \left|\partial_{x_{i}}\sigma_{H}(t,x)-\partial_{x_{i}}\sigma_{H}(t,y)\right|\leq K\left|x-y\right| \\
({\mathbf{H.2.3}}) &  & \left|\sigma_{H}(t,x)-\sigma_{H}(s,x)\right|+\left|\partial_{x_{i}}\sigma_{H}(t,x)-\partial_{x_{i}}\sigma_{H}(s,x)\right|\leq K\left|s-t\right|^{\beta}.
\end{eqnarray*}
\begin{example}
Let us give two examples of such function $\varrho$. Let $q>1$ and  $\delta$
be sufficiently small. Define
\begin{eqnarray*}
\varrho_{1}(u)&:=&\left\{ \begin{array}{ll}
u\log^{1/q}(u^{-1}), & \,\,\,0\leq u\leq\delta \\ \\
\delta\log^{1/q}(\delta^{-1})+\varrho'_1(\delta_{-})(u-\delta), & \,\,\, u>\delta.
\end{array}\right. \\ \\ \\
\varrho_{2}(u)&:=&\left\{ \begin{array}{ll}
u\log^{1/q}(u^{-1})\log^{1/q}\left(\log(u^{-1})\right), & \,\,\,0\leq u\leq\delta\\
\\
\delta\log^{1/q}(\delta^{-1})\log^{1/q}\left(\log(\delta^{-1})\right)+
\varrho'_2(\delta_{-})(u-\delta), & \,\,\, u>\delta.
\end{array}\right.
\end{eqnarray*}
It is easy to see that, for $i=1,2$, the function $\varrho_{i}$ is
concave nondecreasing function satisfying (\ref{eq:rho est}).
\end{example}
We begin by a brief review of the deterministic fractional calculus.
We start with the definition of the integral with respect to fBm as a generalized
Lebesgue-Stieltjes integral, following the work of Z{\"a}hle \cite{zahle}. We fix $\alpha\in(0,1)$.
The Weyl-Marchaud derivatives of $f:[a,b]\longrightarrow\mathbb{R}^{n}$ are given
by:

\[
D_{a+}^{\alpha}f(x)=\dfrac{1}{\Gamma(1-\alpha)}\left(\dfrac{f(x)}{\left(x-a\right)^{\alpha}}+\alpha\int_{a}^{x}\dfrac{f(x)-f(y)}{\left(x-y\right)^{\alpha+1}}\, dy\right)
1\!\!1_{\left(a,b\right)}(x)
\]
and 
\[
D_{b-}^{\alpha}f(x)=\dfrac{\left(-1\right)^{\alpha}}{\Gamma(1-\alpha)}\left(\dfrac{f(x)}{\left(b-x\right)^{\alpha}}+\alpha\int_{x}^{b}\dfrac{f(x)-f(y)}{\left(y-x\right)^{\alpha+1}}\, dy\right)1\!\!1_{\left(a,b\right)}(x),
\]
where $\Gamma(\alpha) =\int_0^{\infty} t^{\alpha -1} e^{-t}dt$ is the Gamma function.
Assuming that $D_{a+}^{\alpha}f_{a+}\in L^{1}[a,b]$ and $D_{b-}^{1-\alpha}g_{b-}\in L^{\infty}[a,b]$,
where $g_{b-}(x)=g(x)-g(b-)$, the generalized (fractional) Lebesgue-Stieltjes
integral of $f$ with respect to $g$ is defined as
\begin{equation}
\int_{a}^{b}f\, dg:=(-1)^{\alpha}\int_{a}^{b}\, D_{a+}^{\alpha}f(x)\, D_{b-}^{1-\alpha}g_{b-}(x)\, dx.\label{eq:frac int}
\end{equation}
If $a\leq c<d\leq b$ then we have 
\[
\int_{c}^{d}f\, dg=\int_{a}^{b}1\!\!1_{(c,d)}f\, dg.
\]
It follows from the H{\"o}lder continuity of $B^{H}$ that $D_{b-}^{1-\alpha}B_{b-}^{H}\in L^{\infty}[a,b]$
almost surely (a.s.~for short). Then, for a function $f$ with $D_{a+}^{\alpha}f\in L^{1}[a,b]$,
we can define the integral with respect to $B^{H}$ through (\ref{eq:frac int}). 

Let $0<\alpha<1/2$ and $\mu\in(0,1]$. We will consider the following normed spaces:
\begin{enumerate}
\item $C^{\mu}$ is the space of $\mu$-H{\"o}lder continuous functions
$f:[0,T]\rightarrow\mathbb{R}^{d}$, equipped with the norm
\[
\|f\|_{\mu}:=\|f\|_{\infty}+\underset{0\leq s<t\leq T}{\sup}\dfrac{\left|f(t)-f(s)\right|}{\left(t-s\right)^{\mu}}<\infty,
\]
where
\[
\|f\|_{\infty}:=\underset{0\leq t\leq T}{\sup}\left|f(t)\right|.
\]
\item $C_{0}^{\mu}$ denotes the space of $\mu$-H{\"o}lder continuous functions $f:[0,T]\longrightarrow \mathbb{R}^d$
such that 
\[
\lim_{\varepsilon\rightarrow0}\left(\sup_{0<|t-s|<\varepsilon}\frac{|f(t)-f(s)|}{(t-s)^{\mu}}\right)=0.
\]
We note that $C_{0}^{\mu}$ is complete and separable with respect to the norm $\| \cdot \|_{\mu}$.
\item $W_{0}^{\alpha,\infty}$ is the space of measurable functions $f:[0,T]\longrightarrow\mathbb{R}^{d}$
such that
\[
\|f\|_{\alpha,\infty}:=\underset{0\leq t\leq T}{\sup}\|f\|_{\alpha,t}<\infty,
\]
where
\[
\|f\|_{\alpha,t}:=\left|f(t)\right|+\int_{0}^{t}\dfrac{\left|f(t)-f(s)\right|}{\left(t-s\right)^{\alpha+1}}\, ds.
\]
\item Finally, $W_{T}^{1-\alpha,\infty}$ denotes the space of measurable
functions $f:[0,T]\longrightarrow\mathbb{R}^{m}$ such that
\[
\|f\|_{1-\alpha,\infty,T}:=\underset{0\leq t\leq T}{\sup}\|f\|_{1-\alpha,\infty,t}<\infty,
\]
where 
\[
\| f\|_{1-\alpha,\infty,t}:=\sup_{0\leq u<v<t}\left(\frac{\left|f(v)-f(u)\right|}{\left(v-u\right)^{1-\alpha}}+\int_{u}^{v}\dfrac{\left|f(y)-f(u)\right|}{\left(y-u\right)^{2-\alpha}}dy\right).
\]
\end{enumerate}
Hence, it is clear that 
\[
\underset{0\leq u<v<t}{\sup}\left|D_{v-}^{1-\alpha}B_{v-}^{H}(u)\right|\leq\dfrac{1}{\Gamma(\alpha)}\| B^{H} \|_{1-\alpha,\infty,t}<\infty,
\]
where the last inequality is a consequence of that fact that the random variable $\| B^{H}\|_{1-\alpha,\infty,t}$
has moments of all orders, see Lemma 7.5 in Nualart  and Rascanu \cite{NR}.
Thus, the stochastic integral with respect to the fBm admits the following estimate
\begin{equation}
\left|\int_{0}^{t}f(s)\, dB^{H}(s)\right|
\leq \frac{1}{\Gamma(\alpha)}\| B^{H}\|_{1-\alpha,\infty,t}\| f\|_{\alpha,1,t}, \label{frac int est}
\end{equation}

where 
\[
\| f \|_{\alpha,1,t}:=\int_{0}^{t}\dfrac{\left|f(s)\right|}{s^{\alpha}}\,ds+\int_{0}^{t}\int_{0}^{s}\dfrac{\left|f(s)-f(y)\right|}{\left(s-y\right)^{\alpha+1}}\, dy\,ds.
\]
We give the definition of strong and weak solution as well as pathwise uniqueness for Equation~\eqref{eq:1-1}.
\begin{defn}[Strong solution]
By a strong solution of Equation (\ref{eq:1-1}) we mean an $\mathcal{F}_{t}$-adapted
continuous process $X(t),t\in[0,T]$ such that there exists an increasing
sequence of stopping times $(T_{R})_{R>0}$ satisfying $\lim_{R\rightarrow\infty}T_{R}=T$
a.s.~and for any $R>0$, we have 
\begin{enumerate}
\item $
\sup_{t\in[0,T]}\mathbb{E}\left[\|X(t\wedge T_{R})\|_{\alpha,t}^{2}\right]<\infty.\label{eq:C1}
$
\item The equation
\begin{eqnarray}
X(t\wedge T_{R}) &=& x_{0}+\int_{0}^{t\wedge T_{R}}b\big(s,X(s)\big)\, ds+\int_{0}^{t\wedge T_{R}}\sigma_{W}\big(s,X(s)\big)\, dW(s)\nonumber \\
& & +\int_{0}^{t\wedge T_{R}}\sigma_{H}\big(s,X(s)\big)\, dB^{H}(s),\label{eq:C2}
\end{eqnarray}
\end{enumerate}
holds a.s.. 
\end{defn}
\begin{defn}[Weak solution]
By a weak solution of Equation (\ref{eq:1-1}) we mean a triplet $(X, W, B^H)$, $\big(\Omega,\mathcal{F}, P\big)$ and $(\mathcal{F}_{t})_{t\in[0,T]}$, such that
\begin{enumerate}
\item $\big(\Omega,\mathcal{F}, P\big)$ is a probability space, and $(\mathcal{F}_{t})_{t\in[0,T]}$ is a filtration, of sub-$\sigma$-algebra of $\mathcal{F}$, satisfying the usual conditions.
\item $W= (W_t, \mathcal{F}_{t})_{t\in[0,T]}$ is a Bm, $B^H= (B^H_t)_{t\in[0,T]}$ is a fBm and  $X= (X_t, \mathcal{F}_{t})_{t\in[0,T]}$ is a continuous and $\mathcal{F}_{t}$-adapted process satisfying a.s.~the Equation (\ref{eq:C2}) for some increasing
sequence of stopping times $(T_{R})_{R>0}$ such that $\lim_{R\rightarrow\infty}T_{R}=T$
a.s..
\end{enumerate} 
\end{defn}

\begin{defn}[Pathwise uniqueness]
We say that pathwise uniqueness holds for Equation (\ref{eq:1-1})
if, whenever $(X,W,B^{H})$ and $(\tilde{X},W,B^{H})$
are two weak solutions of Equation (\ref{eq:1-1}) defined on the
same probability space $\big(\Omega,\mathcal{F},(\mathcal{F}_{t})_{t\in[0,T]},P\big)$
then $X$ and $\tilde{X}$ are indistinguishable.
\end{defn}

\section{Pathwise uniqueness}\label{sec:pathunique}

In this section we investigate the pathwise uniqueness of a solution for Equation~\eqref{eq:1-1},  cf.~Theorem~\ref{pathunique} below, where we make use of the so-called Bihari's type lemma (see Lemma~\ref{lem:Bihari} in Appendix).

Let $X$ be a solution of Equation (\ref{eq:1-1}).
For $R>0$, we define the following stopping time
\[
T_{R}  :=  \inf\Big\{ t\geq0,\left\Vert B^{H}\right\Vert _{1-\alpha,\infty,t}\geq R\Big\} \wedge T,
\]

 For every positive constant $R$, we define the stochastic processes $X_R$ by
\[
X_{R}(t):=X(t\wedge T_{R}),\quad t\in [0,T].
\]

Then it is easy to see that the following equation

\begin{eqnarray*}
X_{R}(t) & =x_{0}+ & \int_{0}^{t\wedge T_{R}}b(s,X(s))\, ds+\int_{0}^{t\wedge T_{R}}\sigma_{W}(s,X(s))dW(s)\\
\\
 &  & +\int_{0}^{t\wedge T_{R}}\sigma_{H}(s,X(s))dB^{H}(s)
\end{eqnarray*}
 holds almost surely. We have the following Lemma.
\begin{lem}
\label{lem: norm est}For any integer $N\geq1$ and $R>0$, there exists a positive constant $C_{N}$ such that 
\[
\sup_{t\in[0,T]}\mathbb{E}\left[\|X_{R}\|_{\alpha,t}^{2N}\right]\leq C_{N}R^{2N}.
\]
\end{lem}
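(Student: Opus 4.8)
The plan is to bound the $2N$-th moment of the norm $\|X_R\|_{\alpha,t}$ by splitting the defining expression for $X_R(t)$ into its four constituent pieces—the initial value, the drift (Lebesgue) integral, the It\^o integral against $W$, and the pathwise fractional integral against $B^H$—and estimating the contribution of each to the norm $\|\cdot\|_{\alpha,t}$. Recall that this norm consists of a supremum term $|f(t)|$ together with the singular integral $\int_0^t |f(t)-f(s)|/(t-s)^{\alpha+1}\,ds$. First I would raise the sum of the four pieces to the power $2N$ and use the elementary inequality $(\sum_{i=1}^4 a_i)^{2N}\le 4^{2N-1}\sum_{i=1}^4 a_i^{2N}$ to reduce to treating each term separately, after which I would take expectations.

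For the deterministic and It\^o terms I would exploit the linear growth bounds in $(\mathbf{H.1.1})$ and $(\mathbf{H.1.3})$. The drift contribution is controlled pathwise by H\"older's inequality applied to $\int_0^{t\wedge T_R} K(1+|X(s)|)\,ds$ together with the analogous estimate for its $\alpha$-increment; the It\^o contribution requires the Burkholder--Davis--Gundy inequality to pass from the stochastic integral to the $L^{2N}$-norm of its quadratic variation, again closed by the linear growth of $\sigma_W$. The crucial term is the fractional integral against $B^H$: here I would invoke the deterministic estimate~\eqref{frac int est}, namely $|\int_0^t f\,dB^H|\le \Gamma(\alpha)^{-1}\|B^H\|_{1-\alpha,\infty,t}\|f\|_{\alpha,1,t}$, and observe that on the event up to the stopping time $T_R$ the factor $\|B^H\|_{1-\alpha,\infty,t}$ is deterministically bounded by $R$ by the very definition of $T_R$. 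This is precisely why the factor $R^{2N}$ appears on the right-hand side, and it is the device that lets us avoid needing moment bounds on $B^H$ in this lemma. Substituting the linear growth of $\sigma_H$ (which follows from $(\mathbf{H.2.1})$ and the H\"older continuity in $(\mathbf{H.2.3})$, giving $|\sigma_H(t,x)|\le K(1+|x|)$ up to a constant) then bounds $\|\sigma_H(\cdot,X(\cdot))\|_{\alpha,1,t}$ in terms of $\int_0^t(1+\|X_R\|_{\alpha,s})\,ds$.

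After collecting all four estimates, I expect to arrive at an inequality of the form
\[
\mathbb{E}\big[\|X_R\|_{\alpha,t}^{2N}\big]\le C_N R^{2N}\Big(1+\int_0^t \mathbb{E}\big[\|X_R\|_{\alpha,s}^{2N}\big]\,ds\Big),
\]
where the constant $C_N$ absorbs $|x_0|$, $T$, $K$, $\Gamma(\alpha)$, and the BDG constant. The remaining step is to conclude by Gronwall's lemma, which yields the uniform-in-$t$ bound $\sup_{t\in[0,T]}\mathbb{E}[\|X_R\|_{\alpha,t}^{2N}]\le C_N R^{2N}$ after renaming the constant. The main obstacle, and the step demanding the most care, is the handling of the $\alpha$-increment part of the fractional integral: one must verify that the singular integral $\int_0^t |Y(t)-Y(s)|/(t-s)^{\alpha+1}\,ds$ of the fractional-integral term $Y$ is itself dominated by the same seminorm $\|B^H\|_{1-\alpha,\infty,t}\,\|\sigma_H(\cdot,X(\cdot))\|_{\alpha,1,t}$-type quantity, so that the $R$-bound survives when passing from the pointwise value to the full $\alpha$-norm; this requires a localized version of~\eqref{frac int est} applied to increments, which is where the regularity imposed on $\sigma_H$ in $(\mathbf{H.2})$ is genuinely used.
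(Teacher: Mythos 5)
Your overall strategy is the same as the paper's: decompose $X_R$ into the initial value, drift, It\^o and fractional parts, use linear growth plus Burkholder--Davis--Gundy for the first three, exploit $\|B^H\|_{1-\alpha,\infty,\cdot}\leq R$ on $[0,T_R]$ to produce the factor $R^{2N}$, and close with a Gronwall argument. However, two steps are left unresolved and, as written, would not go through. First, the estimate \eqref{frac int est} you invoke only bounds the \emph{pointwise} value $\bigl|\int_0^t f\,dB^H\bigr|$, whereas the quantity you must control is the full norm $\|\int_0^{\cdot}f\,dB^H\|_{\alpha,t}$, including the singular increment integral $\int_0^t|Y(t)-Y(s)|(t-s)^{-\alpha-1}ds$. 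You correctly identify this as the crux but only postulate a ``localized version applied to increments''; this is precisely the content of Proposition~\ref{prop:Nua Ras 2}~(jj) (from Nualart--Rascanu), which the paper cites and which yields the bound $C\|B^H\|_{1-\alpha,\infty,t}\int_0^t\bigl((t-s)^{-2\alpha}+s^{-\alpha}\bigr)\bigl(1+\|f\|_{\alpha,s}\bigr)ds$. Without that input the key term $A_3$ is not actually estimated.

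Second, the closing inequality is not of the form $C_NR^{2N}\bigl(1+\int_0^t\mathbb{E}[\|X_R\|_{\alpha,s}^{2N}]\,ds\bigr)$ with a bounded kernel: the drift, It\^o and fractional estimates each produce weakly singular kernels, and what one actually obtains is
\[
\mathbb{E}\bigl[\|X_R\|_{\alpha,t}^{2N}\bigr]\leq C_N|x_0|^{2N}+C_N\bigl(1+R^{2N}\bigr)\int_0^t\bigl(s^{-\alpha}+(t-s)^{-\alpha-1/2}\bigr)\mathbb{E}\bigl[\|X_R\|_{\alpha,s}^{2N}\bigr]\,ds .
\]
The classical Gronwall lemma does not apply to this; the paper first observes that the right-hand side is increasing in $t$, replaces the integrand by $\sup_{0\leq u\leq s}\mathbb{E}[\|X_R\|_{\alpha,u}^{2N}]$, and then applies the Gronwall-type lemma for singular kernels (Lemma~7.6 of Nualart--Rascanu). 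You need either that lemma or an equivalent iteration argument; invoking plain Gronwall at this point is a genuine gap.
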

\begin{proof} 
Along the proof $C_N$ will denote a generic positive constant, which may vary from line to line and  may depend on $N$ and other parameters of the problem. It follows from the convexity of $x^{2N}$ that 
\begin{eqnarray*}
\mathbb{E}\big[\|X_{R}\|_{\alpha,t}^{2N}\big] & \leq & C_{N}\left\{ \left|x_{0}\right|^{2N}+\mathbb{E}\left[\left\|\int_{0}^{\cdot\wedge T_{R}}b(s,X(s))\, ds\right\|_{\alpha,t}^{2N}\right]\right.\\
 &  & +\mathbb{E}\left[\left\|\int_{0}^{\cdot\wedge T_{R}}\sigma_{W}(s,X(s))\, dW(s)\right\|_{\alpha,t}^{2N}\right]\\
 &  & +\left.\mathbb{E}\left[\left\|\int_{0}^{\cdot\wedge T_{R}}\sigma_{H}(s,X(s))\, dB^{H}(s)\right\|_{\alpha,t}^{2N}\right]\right\} \\
 & = & C_{N}\big(\left|x_{0}\right|^{2N}+A_{1}+A_{2}+A_{3}\big).
\end{eqnarray*}
Furthermore we have
\begin{eqnarray*}
&&\left\|\int_{0}^{.\wedge T_{R}}b(s,X(s))\, ds\right\|_{\alpha,t} \\ & &\leq \int_{0}^{t\wedge T_{R}}\left|b(s,X(s))\right|\, ds+\int_{0}^{t}(t-s)^{-\alpha-1}\int_{s\wedge T_{R}}^{t\wedge T_{R}}\left|b(u,X(u))\right|\, du\, ds\\
\\
 && \leq  \int_{0}^{t}\left|b(s\wedge T_{R},X(s\wedge T_{R}))\right|\, ds\\
 &  & +\int_{0}^{t}(t-s)^{-\alpha-1}\int_{s}^{t}\left|b(u\wedge T_{R},X(u\wedge T_{R}))\right|\, du\, ds\\
\\
 & &\leq  \int_{0}^{t}\left|b(s\wedge T_{R},X(s\wedge T_{R}))\right|\, ds\\
 &  & +\dfrac{1}{\alpha}\int_{0}^{t}(t-r)^{-\alpha}\left|b(r\wedge T_{R},X(r\wedge T_{R}))\right|\, dr\\
\\
 & &\leq  C_{\alpha,T}\int_{0}^{t}(t-r)^{-\alpha}\left|b(r\wedge T_{R},X(r\wedge T_{R}))\right|\, dr
\end{eqnarray*}
where $C_{\alpha,T}$ is a constant depending on $\alpha$ and $T$.
Using the linear growth assumption in ({\bf{H.1.1}}),  H{\"o}lder's inequality and the fact that $\alpha <\frac12$, we obtain
\begin{eqnarray*}
A_{1} & \leq & C_{N}\,\mathbb{E}\left[\left(1+\int_{0}^{t}\dfrac{\left|X_{R}(s)\right|}{(t-s)^{\alpha}}\, ds\right)^{2N}\right]\\
 & \leq & C_{N}\,\mathbb{E}\left[\left(1+\int_{0}^{t}\left|X_{R}(s)\right|^{2}\, ds\right)^{N}\right]\\
 & \leq & C_{N}\,\left(1+\int_{0}^{t}\mathbb{E}\left[\left|X_{R}(s)\right|^{2N}\right]\, ds\right).
\end{eqnarray*}
We have also that
\begin{eqnarray*}
A_{2} & \leq & C_{N}\mathbb{E}\left[\left|\int_{0}^{t\wedge T_{R}}\sigma_{W}(s,X(s))dW(s)\right|^{2N}\right]
\\ &&+C_{N}\mathbb{E}\left[\left(\int_{0}^{t}{(t-s)^{-\alpha-1}}{\left|\int_{s\wedge T_{R}}^{t\wedge T_{R}}\sigma_{W}(u,X(u))\, dW(u)\right|}\, ds\right)^{2N}\right]\\
\\
 & = & A_{21}+A_{22}.
\end{eqnarray*}

For $A_{21}$, using the linear growth assumption in $\mathbf{(H.1.3)}$, the Burkh{\"o}lder
and H{\"o}lder inequalities, we obtain 

\begin{eqnarray*}
A_{21} & \leq & C_{N}\mathbb{E}\left[\int_{0}^{t\wedge T_{R}}\left|\sigma_{W}(s,X(s))\right|^{2N}ds\right]\\
\\
 & \leq & C_{N}\mathbb{E}\left[\int_{0}^{t}\left|\sigma_{W}(s\wedge T_{R},X(s\wedge T_{R}))\right|^{2N}ds\right]\\
\\
 & \leq & C_{N}\left(1+\int_{0}^{t}\mathbb{E}\left[\left|X_{R}(s)\right|^{2N}\right]ds\right)
\end{eqnarray*}

For $A_{22}$, again the Burkh{\"o}lder and H{\"o}lder inequalities give
\begin{eqnarray*}
A_{22} & \leq & C_{N}\left(\int_{0}^{t}\dfrac{ds}{(t-s)^{\alpha+\frac{1}{2}}}\right)^{2N-1}\int_{0}^{t}{(t-s)^{-\alpha-\frac{1}{2}-N}}{\mathbb{E}\left[\left|\int_{s\wedge T_{R}}^{t\wedge T_{R}}\sigma_{W}(u,X(u))\, dW(u)\right|^{2N}\right]}ds\\
 & \leq & C_{N}\dint_{0}^{t}{(t-s)^{-\alpha-\frac{3}{2}}}{\mathbb{E}\left[\int_{s\wedge T_{R}}^{t\wedge T_{R}}\left|\sigma_{W}(u,X(u))\right|^{2N}\, du\right]} ds\\
 & \leq & C_{N}\int_{0}^{t}{(t-s)^{-\alpha-\frac{3}{2}}}{\mathbb{E}\left[\int_{s}^{t}\left|\sigma_{W}(u\wedge T_{R},X(u\wedge T_{R}))\right|^{2N}\, du\right]}\, ds.
\end{eqnarray*}
Applying now Fubini\textquoteright s theorem and using the growth
assumption in $\mathbf{(H.1.3)}$, we obtain
\[
A_{22}\leq C_{N}\left(\int_{0}^{t}(t-s)^{-\alpha-\frac{1}{2}}\left(1+\mathbb{E}\left[\left|X_{R}(s)\right|^{2N}\right]\right)\, ds\right).
\]
Thus 
\[
A_{2}\leq C_{N}\left(1+\int_{0}^{t}(t-s)^{-\alpha-\frac{1}{2}}\mathbb{E}\left[\left|X_{R}(s)\right|^{2N}\right]\, ds\right).
\]
Let us remark that, for $t\in [0,T]$, we have 
\begin{equation}
\int_{0}^{t\wedge T_{R}}\sigma_{H}(s,X(s))\, dB^{H}(s)=\int_{0}^{t}\sigma_{H}(s\wedge T_{R},X(s\wedge T_{R}))\, dB^{H}(s\wedge T_{R}).\label{stopfbmint}
\end{equation}
Then it follows from Proposition \ref{prop:Nua Ras 2} (jj), in the Appendix,
that 
\[
A_{3}\leq C_{N}R^{2N}\int_{0}^{t}\left((t-s)^{-2\alpha}+s^{-\alpha}\right)\left(1+\mathbb{E}\left[\|X_{R}\|_{\alpha,s}^{2N}\right]\right)\, ds.
\]
Putting all the estimates obtained for $A_{1}$, $A_{2}$ 
and $A_{3}$ together, we obtain

\begin{equation}
\mathbb{E}\left[\|X_{R}\|_{\alpha,t}^{2N}\right]  \leq  C_{N}\left|x_{0}\right|^{2N}+C_{N}(1+R^{2N}) \int_{0}^{t}\varphi(t,s)\mathbb{E}\left[\|X_{R}\|_{\alpha,s}^{2N}\right]\, ds, \label{eq:est esp-1}
\end{equation}
where  
\begin{eqnarray*}
\varphi(t,s) & := & s^{-\alpha}+(t-s)^{-\alpha-1/2}.
\end{eqnarray*}
Therefore, since the right  hand side of Equation (\ref{eq:est esp-1})
is an increasing function of $t$, we have
\begin{eqnarray*}
\sup_{0\leq s\leq t}\mathbb{E}\big[\|X_{R}\|_{s}^{2N}\big] & \leq & C_{N}\left|x_{0}\right|^{2N}+C_{N}\big(1+R^{2N}\big) \int_{0}^{t}\varphi(s,t)\sup_{0\leq u\leq s}\mathbb{E}\big[\|X_{R}\|_{\alpha,u}^{2N}\big]\, ds.
\end{eqnarray*}
As a consequence, by the Gronwall type lemma (Lemma 7.6 in \cite{NR}),
we deduce the desired estimate.\end{proof}
Let $X$ and $Y$ be two solutions of Equation (\ref{eq:1-1}) defined on the
same probability space $(\Omega,\mathcal{F},(\mathcal{F}_{t\in[0,T]}),P)$.
For $M>0$, we define the following stopping time
\[
\tau_{M}  :=  \inf\big\{t:\|X\|_{\alpha,t}  \vee \|Y\|_{\alpha,t}>M \big\}\wedge T.
\]
Now for every positive constants $R$ and $M$, we define the stochastic processes $X_{R,M}$ (resp. $Y_{R,M}$)  by
\[
X_{R,M}(t):=X(t\wedge T_{R}\wedge\tau_{M}),\quad t\in [0,T],
\]
(resp. $ Y_{R,M}(t):=Y(t\wedge T_{R}\wedge\tau_{M}),\quad t\in [0,T] $).

\begin{lem}
Under Hypotheses $\mathbf{(H.1)}$ and $\mathbf{(H.2)}$, there exists a positive constant $C_{R,M}$
such that for $t\in[0,T]$,
\begin{multline}
\mathbb{E}\left[\|X_{R,M}-Y_{R,M}\|_{\alpha,t}^{2}\right] \\
\leq C_{R,M}\int_{0}^{t}\varphi(s,t)\Big[\mathbb{E}\left[\|X_{R,M}-Y_{R,M}\|_{\alpha,s}^{2}\right]
\label{eq:differ est} 
+\varrho\left(\mathbb{E}\left[\|X_{R,M}-Y_{R,M}\|_{\alpha,s}^{2}\right]\right)\Big]\, ds. 
\end{multline}

\end{lem}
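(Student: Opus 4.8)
The plan is to follow exactly the same template as the proof of Lemma~\ref{lem: norm est}, but now estimating the difference of the two solutions rather than a single solution. Writing $Z_{R,M}:=X_{R,M}-Y_{R,M}$, I would start from the representation
\[
Z_{R,M}(t)=\int_0^{t\wedge T_R\wedge\tau_M}\!\!\!\delta b\,ds+\int_0^{t\wedge T_R\wedge\tau_M}\!\!\!\delta\sigma_W\,dW+\int_0^{t\wedge T_R\wedge\tau_M}\!\!\!\delta\sigma_H\,dB^H,
\]
where $\delta b:=b(s,X(s))-b(s,Y(s))$ and similarly for $\delta\sigma_W,\delta\sigma_H$; the initial condition $x_0$ cancels. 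By convexity of $x\mapsto x^2$ I bound $\mathbb{E}[\|Z_{R,M}\|_{\alpha,t}^2]$ by a constant times the sum of three terms $B_1,B_2,B_3$ coming from the drift, the It\^o integral, and the fBm integral respectively, each estimated in the $\|\cdot\|_{\alpha,t}$ norm.

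First I would treat the drift and Brownian terms $B_1,B_2$. The computations mirror those for $A_1,A_2$ verbatim, except that at the step where the linear growth hypotheses $\mathbf{(H.1.1)}$ and $\mathbf{(H.1.3)}$ were used, I now invoke the modulus-of-continuity hypotheses $\mathbf{(H.1.2)}$ and $\mathbf{(H.1.4)}$: namely $|\delta b|^2\le\varrho(|X-Y|^2)$ and $|\delta\sigma_W|^2\le\varrho(|X-Y|^2)$. After applying H\"older's inequality, the Burkh\"older--Davis--Gundy inequality, and Fubini's theorem exactly as before, and after truncation at $\tau_M$ so that $|X-Y|^2$ is controlled by $\|Z_{R,M}\|_{\alpha,s}^2$, these produce a contribution of the form $C_{R,M}\int_0^t\varphi(s,t)\,\varrho\big(\mathbb{E}[\|Z_{R,M}\|_{\alpha,s}^2]\big)\,ds$. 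The one genuinely new ingredient here is that I must pull the concave $\varrho$ out of the expectation and the time integral: I would use Jensen's inequality (valid since $\varrho$ is concave) to move the expectation inside, giving the $\varrho(\mathbb{E}[\cdots])$ form appearing on the right-hand side of the claimed inequality.

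The fBm term $B_3$ is where I expect the main obstacle. Here neither linear growth nor a simple modulus bound suffices, because the fractional integral estimate \eqref{frac int est} involves $\|\delta\sigma_H\|_{\alpha,1,t}$, which contains increments of $\delta\sigma_H$ in the time variable and hence mixes regularity in $t$ and $x$. I would use the stopping representation \eqref{stopfbmint} and apply Proposition~\ref{prop:Nua Ras 2}, the analogue of the estimate used for $A_3$, but now in its difference form. Here Hypothesis $\mathbf{(H.2)}$ is essential: the boundedness of $\partial_x\sigma_H$ in $\mathbf{(H.2.1)}$ and its Lipschitz property $\mathbf{(H.2.2)}$, together with the H\"older control $\mathbf{(H.2.3)}$, let me bound the relevant increments of $\sigma_H(s,X(s))-\sigma_H(s,Y(s))$ by the increments of $X-Y$, i.e.\ by $\|Z_{R,M}\|_{\alpha,s}$ plus the quadratic term coming from the Lipschitz derivative. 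The factor $R$ enters through $\|B^H\|_{1-\alpha,\infty,t}\le R$ on $\{t\le T_R\}$, and the factor $M$ enters through $\|X\|_{\alpha,s}\vee\|Y\|_{\alpha,s}\le M$ on $\{s\le\tau_M\}$, which is exactly why the constant $C_{R,M}$ depends on both $R$ and $M$. After squaring and taking expectations this yields a contribution bounded by $C_{R,M}\int_0^t\varphi(s,t)\,\mathbb{E}[\|Z_{R,M}\|_{\alpha,s}^2]\,ds$, of the linear type.

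Finally I would collect $B_1,B_2,B_3$. The drift and Brownian parts furnish the $\varrho$-term, the fBm part furnishes the linear term, and together they give precisely
\[
\mathbb{E}\big[\|Z_{R,M}\|_{\alpha,t}^2\big]\le C_{R,M}\int_0^t\varphi(s,t)\Big[\mathbb{E}\big[\|Z_{R,M}\|_{\alpha,s}^2\big]+\varrho\big(\mathbb{E}\big[\|Z_{R,M}\|_{\alpha,s}^2\big]\big)\Big]\,ds,
\]
which is the asserted inequality~\eqref{eq:differ est}. The delicate points to check carefully are the legitimacy of the Jensen step (concavity and the finiteness $\mathbb{E}[\|Z_{R,M}\|_{\alpha,s}^2]<\infty$, guaranteed by Lemma~\ref{lem: norm est}), and the verification that the increment estimates for $\sigma_H$ under $\mathbf{(H.2)}$ reproduce the same singular kernel $\varphi(s,t)=s^{-\alpha}+(t-s)^{-\alpha-1/2}$ already isolated in the previous lemma, so that the two families of terms can be combined under a single integral.
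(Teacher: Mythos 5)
Your proposal is correct and follows essentially the same route as the paper: the same three-term decomposition $B_1,B_2,B_3$, the use of $\mathbf{(H.1.2)}$ and $\mathbf{(H.1.4)}$ for the drift and It\^o terms, the second-order increment estimate for $\sigma_H$ under $\mathbf{(H.2)}$ (with $R$ and $M$ entering exactly as you describe) producing the linear term, and the final Jensen step to pull the concave $\varrho$ outside the expectation.
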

\begin{proof}
The proof of this result is long and technical. It is divided into
several parts. First we have

\begin{eqnarray*}
X_{R,M}(t)-Y_{R,M} (t)& = & \int_{0}^{t\wedge T_{R}\wedge\tau_{M}}(b(s,X(s))-b(s,Y(s)))\, ds\\ 
& &+\int_{0}^{t\wedge T_{R}\wedge\tau_{M}}(\sigma_{W}(s,X(s))-\sigma_{W}(s,Y(s)))\, dW(s)\\
 &  & +\int_{0}^{t\wedge T_{R}\wedge\tau_{M}}(\sigma_{H}(s,X(s))-\sigma_{H}(s,Y(s)))\, dB^{H}(s)\\
\\
 & = & B_{1}(t\wedge T_{R}\wedge\tau_{M})+B_{2}(t\wedge T_{R}\wedge\tau_{M})+B_{3}(t\wedge T_{R}\wedge\tau_{M}).
\end{eqnarray*}
It follows that
\begin{eqnarray*}
&&\|X_{R,M}-Y_{R,M}\|_{\alpha,t}^{2}\\&& \leq   3\left(\|B_{1}(\cdot\wedge T_{R}\wedge\tau_{M})\|_{\alpha,t}^{2} +\|B_{2}(\cdot\wedge T_{R}\wedge\tau_{M})\|_{\alpha,t}^{2} +\|B_{3}(\cdot\wedge T_{R}\wedge\tau_{M})\|_{\alpha,t}^{2} \right).
\end{eqnarray*} 

We have to estimated $\|B_{i}(\cdot\wedge T_{R}\wedge\tau_{M})\|_{\alpha,t}^{2}$, $i\in\{1,2,3\}$.  For the sake of conciseness, we define $$\Delta(f)(s)=f(s,X(s))-f(s,Y(s)), \quad f\in \{b,\sigma_{W},\sigma_{H}\}.$$
\begin{description}
\item[Step~1: $B_1$.] Using simple estimations it is easy to see that

\begin{eqnarray*}
\|B_{1}(\cdot\wedge T_{R}\wedge\tau_{M})\|_{\alpha,t} &\leq & \int_{0}^{t\wedge T_{R}\wedge\tau_{M}}\left|\Delta(b)(s)\right|\, ds\\
& +& \int_{0}^{t}(t-s)^{-\alpha-1}\int_{s\wedge T_{R}\wedge\tau_{M}}^{t\wedge T_{R}\wedge\tau_{M}}\left|\Delta(b)(u)\right|\, du\, ds
 \\
& \leq & \int_{0}^{t}\left|\Delta(b)(s\wedge T_{R}\wedge\tau_{M})\right|\, ds \\
& +&\int_{0}^{t}(t-s)^{-\alpha-1}\int_{s}^{t}\left|\Delta(b)(u\wedge T_{R}\wedge\tau_{M})\right|\, du\, ds\\
& \leq &
C_{\alpha, T}\int_{0}^{t}(t-r)^{-\alpha}\left|\Delta(b)(r\wedge T_{R}\wedge\tau_{M})\right|\, dr.  
\end{eqnarray*}

We use the fact that $\alpha<\frac{1}{2}$, H{\"o}lder inequality and hypothesis $\mathbf{(H.1.2)}$
to obtain 
\begin{eqnarray*}
\|B_{1}(.\wedge T_{R}\wedge\tau_{M})\|_{\alpha,t}^{2} 
 & \leq & C_{\alpha, T}^{2}\int_{0}^{t}\dfrac{\left|\Delta(b)(s\wedge T_{R}\wedge\tau_{M})\right|^{2}}{\left(t-s\right)^{\alpha}}\, ds\\
 & \leq & C_{\alpha, T}^{2}\int_{0}^{t}\dfrac{\varrho\left(|X_{R,M}(s)-Y_{R,M}(s)|^{2}\right)}{\left(t-s\right)^{\alpha}}\, ds\\
 & \leq & C_{\alpha, T}^{2}\int_{0}^{t}\varphi(s,t)\varrho\left(\|X_{R,M}-Y_{R,M}\|_{\alpha,s}^{2}\right)\, ds.
\end{eqnarray*}
 \item[Step~2: $B_3$.]
If $1-H<\alpha<\min\left(\beta,1/2\right)$ , we have from Proposition
4.3 in \cite{NR} (see  Proposition \ref{prop:Nua Ras 1} (ii)
in the Appendix) 
that 
\begin{multline*}
\|B_{3}(\cdot\wedge T_{R}\wedge\tau_{M})\|_{\alpha,t}^{2}  \leq  C R^{2} \bigg(\int_{0}^{t}\left((t-s)^{-2\alpha}+s^{-\alpha}\right)\|\Delta(\sigma_{H})(\cdot\wedge T_{R}\wedge\tau_{M})\|_{\alpha,s}\, ds\bigg)^{2}.
\end{multline*}
Now using the assumptions $\mathbf{(H.2)}$ and Lemma 7.1 in Nualart
Rascanu \textcolor{magenta}{\cite{NR}} we obtain
\begin{eqnarray*}
& & \left|\sigma_{H}(t,x_{1})-\sigma_{H}(s,x_{2})-\sigma_{H}(t,y_{1})+\sigma_{H}(s,y_{2})\right| \\
& \leq & K\left|x_{1}-x_{2}-y_{1}+y_{2}\right|+K\left|x_{1}-y_{1}\right|\left|t-s\right|^{\beta}\\ 
& & +K\left|x_{1}-y_{1}\right|\left(\left|x_{1}-x_{2}\right|+\left|y_{1}-y_{2}\right|\right).
\end{eqnarray*}
Therefore
\begin{eqnarray*}
&  &\bigg|\sigma_{H}(t\wedge T_{R}\wedge\tau_{M},X_{R,M}(t))-\sigma_{H}(s\wedge T_{R}\wedge\tau_{M},X_{R,M}(s))\\
\\& &-\sigma_{H}(t\wedge T_{R}\wedge\tau_{M},Y_{R,M}(t))+\sigma_{H}(s\wedge T_{R}\wedge\tau_{M},Y_{R,M}(s))\bigg| \\ 
\\ &\leq & K\Big[\left|X_{R,M}(t)-X_{R,M}(s)-Y_{R,M}(t)+Y_{R,M}(s)\right|+K\left|X_{R,M}(t)-Y_{R,M}(t)\right|\left|t-s\right|^{\beta}\\
\\ & & +\left|X_{R,M}(t)-Y_{R,M}(t)\right|\left(\left|X_{R,M}(t)-X_{R,M}(s)\right|+\left|Y_{R,M}(t)-Y_{R,M}(s)\right|\right)\Big].
\end{eqnarray*}
Thus we have 
\begin{eqnarray*}
& &\|\Delta(\sigma_{H})(.\wedge T_{R}\wedge\tau_{M})\|_{\alpha,t} \\ 
\\
& \leq & K\left[\left|X_{R,M}(t)-Y_{R,M}(t)\right|+\int_{0}^{t}\dfrac{\left|X_{R,M}(t)-X_{R,M}(s)-Y_{R,M}(t)+Y_{R,M}(s)\right|}{\left(t-s\right)^{\alpha+1}}\, ds\right.\\
&  & +\left|X_{R,M}(t)-Y_{R,M}(t)\right|\left(\int_{0}^{t}\dfrac{ds}{\left(t-s\right)^{\alpha-\beta+1}}+\int_{0}^{t}\dfrac{\left|X_{R,M}(t)-X_{R,M}(s)\right|}{\left(t-s\right)^{\alpha+1}}\, ds\right.\\
&  & \qquad\qquad\qquad\qquad\qquad\qquad+\left.\left.\int_{0}^{t}\dfrac{\left|Y_{R,M}(t)-Y_{R,M}(s)\right|}{\left(t-s\right)^{\alpha+1}}\, ds\right)\right].
\end{eqnarray*}
Now it is easy to see that 
\begin{eqnarray*}
& & \|B_{3}(\cdot\wedge T_{R}\wedge\tau_{M})\|_{\alpha,t}^{2}  \\
& \leq &CR^{2}\int_{0}^{t}\left((t-s)^{-2\alpha}+s^{-\alpha}\right)\left(1+\|X_{R,M}\|_{\alpha,s}^{2}+\|Y_{R,M}\|_{\alpha,s}^{2}\right)\|X_{R,M}-Y_{R,M}\|_{\alpha,s}^{2}\, ds\\
 & \leq & CR^{2}M^{2}\int_{0}^{t}\varphi(s,t)\|X_{R,M}-Y_{R,M}\|_{\alpha,s}^{2}\, ds.
\end{eqnarray*}
\item[Spet~3: $B_2$.]
Till now we have made estimates for pathwise integrals.
As $B_{2}$ is a stochastic integral we need to use martingale type
inequality. First we have 
\[
\|B_{2}(\cdot\wedge T_{R}\wedge\tau_{M})\|_{\alpha,t}^{2}
\leq 2\left(|B_{2}(t\wedge T_{R}\wedge\tau_{M})|^{2}+\big(\tilde{B}_2(t)\big)^{2}\right),
\]
where 
\[
\tilde{B}_{2}(t):=\int_{0}^{t}\frac{\left|\int_{s\wedge T_{R}\wedge\tau_{M}}^{t\wedge T_{R}\wedge\tau_{M}}\Delta(\sigma_{W})(u)\, dW(u)\right|}{(t-s)^{\alpha+1}}\, ds.
\]

It then follows from Burkh{\"o}lder inequality and assumption $\mathbf{(H.1.4)}$
that
\begin{eqnarray*}
\mathbb{E}\big(|B_{2}(t\wedge T_{R}\wedge\tau_{M})|^{2}\big) 
 &\leq & \mathbb{E}\left(\int_{0}^{t}|\Delta(\sigma_{W})(s\wedge T_{R}\wedge\tau_{M})|^{2}\, ds\right)\\
  &\leq & C\,\mathbb{E}\left(\int_{0}^{t}\varrho\left(|X_{R,M}(s)-Y_{R,M}(s)|^{2}\right)\, ds\right)\\
  &\leq & C\,\mathbb{E}\left(\int_{0}^{t}\varrho\left(\|X_{R,M}-Y_{R,M}\|_{\alpha,s}^{2}\right)\, ds\right).
\end{eqnarray*}
$\tilde{B_{2}}$: Using H{\"o}lder's inequality and Fubini's theorem we have
\begin{eqnarray*}
 \mathbb{E}\left[\big|\tilde{B}_{2}(t)\big|^{2}\right]
 & \leq & C\,\mathbb{E}\left[\int_{0}^{t}{(t-s)^{-\frac{3}{2}-\alpha}}{\left|\int_{s\wedge T_{R}\wedge\tau_{M}}^{t\wedge T_{R}\wedge\tau_{M}}\Delta(\sigma_{W})(u)\, dW(u)\right|^{2}}\, ds\right]\\
\\
 & \leq & C\int_{0}^{t}{(t-s)^{-\frac{3}{2}-\alpha}}{\mathbb{E}\left[\left|\int_{s\wedge T_{R}\wedge\tau_{M}}^{t\wedge T_{R}\wedge\tau_{M}}\Delta(\sigma_{W})(u)\, dW(u)\right|^{2}\right]} ds.
\end{eqnarray*}
Using the same techniques as in the estimation of $I_{2}$ we have
\begin{eqnarray*}
 \mathbb{E}\left[\left|\int_{s\wedge T_{R}\wedge\tau_{M}}^{t\wedge T_{R}\wedge\tau_{M}}\Delta(\sigma_{W})(u)\, dW(u)\right|^{2}\right]
 & \leq & \mathbb{E}\left[\int_{s}^{t}|\Delta(\sigma_{W})(u\wedge T_{R}\wedge\tau_{M})|^{2}\, du\right]\\
 & \leq & C\,\mathbb{E}\left[\int_{s}^{t}\varrho\left(\|X_{R,M}-Y_{R,M}\|_{\alpha,u}^{2}\right)du\right].
\end{eqnarray*}
Then, it follows that 
\begin{eqnarray*}
\mathbb{E}\left[\big|\tilde{B}_{2}(t)\big|^{2}\right]
 & \leq & C\int_{0}^{t}{(t-s)^{-\frac{3}{2}-\alpha}}{\mathbb{E}\left[\int_{s}^{t}\varrho\left(\|X_{R,M}-Y_{R,M}\|_{\alpha,u}^{2}\right)du\right]}ds.
 \end{eqnarray*}
Consequently 
\[
\mathbb{E}\big[\|B_{2}\|_{\alpha,t}^{2}\big]\\
\leq C\int_{0}^{t}\varphi(s,t)\mathbb{E}\left[\varrho\left(\|X_{R,M}-Y_{R,M}\|_{\alpha,s}^{2}\right)\right]ds.
\]
\item[Step~4:]
Combining all estimates, leads to
\begin{eqnarray*}
&& \mathbb{E}\left[\|X_{R,M}-Y_{R,M}\|_{\alpha,s}^{2}\right]
\\ && \leq
C_{M,R}\int_{0}^{t}\varphi(s,t)\mathbb{E}\Big[\|X_{R,M}-Y_{R,M}\|_{\alpha,s}^{2} +\varrho\left(\|X_{R,M}-Y_{R,M}\|_{\alpha,s}^{2}\right)\Big]ds.
\end{eqnarray*}
Since $\varrho$ is concave, Jensen's inequality gives
\begin{eqnarray*}
&&
\mathbb{E}\left[\|X_{R,M}-Y_{R,M}\|_{\alpha,s}^{2}\right]
\\ && \leq
C_{M,R}\int_{0}^{t}\varphi(s,t)\Big[\mathbb{E}\left[\left(\|X_{R,M}-Y_{R,M}\|_{\alpha,s}^{2}\right)\right]
 +\varrho\left(\mathbb{E}\left[\|X_{R,M}-Y_{R,M}\|_{\alpha,s}^{2}\right]\right)\Big]ds.
\end{eqnarray*}
\end{description}
This concludes the proof.
\end{proof}
\begin{thm}[Pathwise uniqueness]\label{pathunique}
Let $1-H<\alpha<\min\left(\beta,1/2\right)$.
Then, under hypotheses $\mathbf{(H.1)}$ and $\mathbf{(H.2)}$, the pathwise uniqueness property
holds for Equation (\ref{eq:1-1}).
\end{thm}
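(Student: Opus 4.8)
The plan is to combine the integral inequality of the preceding lemma with the Bihari-type lemma of the Appendix (Lemma~\ref{lem:Bihari}). Fix $R,M>0$ and set $g(t):=\mathbb{E}\big[\|X_{R,M}-Y_{R,M}\|_{\alpha,t}^{2}\big]$. First I would check that $g$ is finite and bounded on $[0,T]$: by the definition of $\tau_{M}$ one controls $\|X_{R,M}\|_{\alpha,t}$ and $\|Y_{R,M}\|_{\alpha,t}$ (and Lemma~\ref{lem: norm est} with $N=1$ provides the integrability needed), so $g$ takes values in a bounded interval. This boundedness is what makes the comparison argument legitimate.

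Next I would recast the right-hand side of \eqref{eq:differ est} into a single-modulus form. Since $\varrho$ is concave, increasing and $\varrho(0)=0$, the function $\tilde{\varrho}(u):=u+\varrho(u)$ is again concave, increasing with $\tilde{\varrho}(0)=0$. Concavity gives the linear lower bound $\varrho(u)\ge(\varrho(\delta)/\delta)\,u$ on $[0,\delta]$, whence $\varrho(u)\le\tilde{\varrho}(u)\le C\varrho(u)$ near the origin; consequently $\tilde{\varrho}$ still satisfies the divergence condition \eqref{eq:rho est}. The inequality \eqref{eq:differ est} then reads $g(t)\le C_{R,M}\int_{0}^{t}\varphi(s,t)\,\tilde{\varrho}(g(s))\,ds$ with $\varphi(s,t)=s^{-\alpha}+(t-s)^{-\alpha-1/2}$, which is exactly the hypothesis of Lemma~\ref{lem:Bihari}. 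I would stress here that the singular kernel $\varphi(\cdot,t)$ is integrable on $[0,t]$ precisely because $\alpha<1/2$, and that the exponent $q$ enters through a H\"older splitting of the $s$-integral: choosing $q$ large makes the conjugate exponent $q'$ close to $1$, so that $(\alpha+\tfrac12)q'<1$ and the kernel is $q'$-integrable, while the companion factor $\big(\int_0^t\tilde{\varrho}(g(s))^{q}\,ds\big)^{1/q}$ is governed by the $q$-form of \eqref{eq:rho est}. All of this is packaged inside Lemma~\ref{lem:Bihari}.

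Applying Lemma~\ref{lem:Bihari} yields $g\equiv0$ on $[0,T]$, i.e.\ $X_{R,M}(t)=Y_{R,M}(t)$ almost surely for every fixed $t$; by path continuity the stopped processes $X(\cdot\wedge T_{R}\wedge\tau_{M})$ and $Y(\cdot\wedge T_{R}\wedge\tau_{M})$ are indistinguishable. It then remains to remove the localisation. As $R\to\infty$ one has $T_{R}\uparrow T$ a.s., because $\|B^{H}\|_{1-\alpha,\infty,t}$ is a.s.\ finite (indeed it has moments of all orders, Lemma~7.5 in \cite{NR}); as $M\to\infty$ one has $\tau_{M}\uparrow T$ a.s., because any solution has an a.s.\ finite $\|\cdot\|_{\alpha,T}$-norm. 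Hence $T_{R}\wedge\tau_{M}\uparrow T$ a.s., and letting $R,M\to\infty$ gives $X(t)=Y(t)$ a.s.\ for every $t\in[0,T]$, so that $X$ and $Y$ are indistinguishable, which is the assertion.

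The main obstacle is the step isolated in the second paragraph: reducing the two-term bound to the single-modulus Bihari inequality and verifying that the divergence condition \eqref{eq:rho est} is inherited by $\tilde{\varrho}$ and survives the H\"older treatment of the singular, time-dependent kernel $\varphi$. Once the inequality is brought into the exact shape required by Lemma~\ref{lem:Bihari}, the conclusion $g\equiv0$ and the subsequent limiting argument in $R$ and $M$ are routine.
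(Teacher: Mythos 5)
Your proposal is correct and follows essentially the same route as the paper: introduce $\tilde{\varrho}(u)=u+\varrho(u)$, use the linear lower bound $\varrho(u)\geq cu$ near $0$ (a consequence of concavity) to check that $\tilde{\varrho}$ inherits condition \eqref{eq:rho est}, apply Lemma~\ref{lem:Bihari} to \eqref{eq:differ est} to get $X_{R,M}=Y_{R,M}$, and then remove the localisation by letting $M\to\infty$ (via Lemma~\ref{lem: norm est}) and $R\to\infty$ (via the a.s.\ finiteness of $\|B^{H}\|_{1-\alpha,\infty,T}$). The only cosmetic difference is that you phrase the linear lower bound on $[0,\delta]$ rather than $[0,1]$ and add some commentary on the integrability of the kernel $\varphi$, which does not change the argument.
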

\begin{proof}
It is simple to see that the function $\tilde{\varrho}(u)=u+\varrho(u)$
is a concave increasing function from $\mathbb{R}_{+}$ to $\mathbb{R}_{+}$
such that $\tilde{\varrho}(0)=0$ and $\tilde{\varrho}(u)>0$ for
$u>0$. On the other hand, we have $\varrho(u)\geq\varrho(1)u$ for
$0\leq u\leq1$. Then 
\[
\int_{0^{+}}\dfrac{du}{\tilde{\varrho}^{q}(u^{1/q})}\geq\left(\dfrac{\varrho(1)}{1+\varrho(1)}\right)^{q}\int_{0^{+}}\dfrac{du}{\varrho^{q}(u^{1/q})}=\infty.
\]
 Therefore, the condition (\ref{eq:rho est}) is satisfied for the
function $\tilde{\varrho}$. Consequently, we can apply Lemma \ref{lem:Bihari}
in the Appendix to the inequality (\ref{eq:differ est}) to obtain
\[
\|X_{R,M}-Y_{R,M}\|_{\alpha,t}^{2}=0,\,\, a.s.
\]
 This implies $X(t)=Y(t)$ a.s.~for all $t< T_{R}\wedge\tau_{M}$.
By letting $M\rightarrow\infty$ we get, by Lemma \ref{lem: norm est},
$X(t)=Y(t)\,\, a.s.$ for all $t<T_{R}$. Using the that fact that the random variable $\| B^{H}\|_{1-\alpha,\infty,t}$
has moments of all orders, see Lemma~7.5 in Nualart  and Rascanu \cite{NR},	it is not difficult that almost surely $T_{R} = T$ for $R$ large enough. This concludes the proof.

\end{proof}

\section{Euler Approximation scheme}\label{sec:Euler}
\textcolor{magenta}{}
In this section, we apply the Euler approximation procedure in order to obtain a weak solution of 
Equation~\eqref{eq:1-1}. Under the condition that pathwise uniqueness holds for Equation~\eqref{eq:1-1} we prove that the Euler approximation converges to a process which is a strong solution of the SDE~\eqref{eq:1-1}, see Theorem~\ref{thm:Eulerstrong} below.

Let $0=t_{0}^{n}<t_{1}^{n}<\cdots<t_{i}^{n}<\cdots<t_{n}^{n}=T$
 be a sequence of partitions of $[0,T]$ such that 
\[
\underset{0\leq i\leq n-1}{\sup}\left|t_{i+1}^{n}-t_{i}^{n}\right|\rightarrow 0,\quad \mbox{as}\quad n\rightarrow\infty.
\]
 We define Euler's approximations as the
process $X^{n}$, $n\in\mathbb{N}$, satisfying
\begin{eqnarray}
X^{n}(t)& = &  x_{0}+\int_{0}^{t}b(k_{n}(s),X(k_{n}(s)))\, ds+\int_{0}^{t}\sigma_{W}(k_{n}(s),X(k_{n}(s)))\, dW(s)\nonumber \\
 & &+\int_{0}^{t}\sigma_{H}(k_{n}(s),X(k_{n}(s)))\, dB^{H}(s),
\label{eq:app sch}
\end{eqnarray}
where $k_{n}(t):=t_{i}^{n}$ if $t\in\left[t_{i}^{n},t_{i+1}^{n}\right)$ and $t\in [0,T]$.
For every positive constant $R$ we define the family of stochastic
processes by
\[
X_{R}^{n}(t):=X^{n}(t\wedge T_{R}), \quad t\in [0,T]. 
\]
Then it is easy to see that the process $X_{R}^{n}$
satisfies, a.s., the following
\begin{eqnarray*}
X_{R}^{n}(t) & =x_{0}+ & \int_{0}^{t\wedge T_{R}}b(k_{n}(s),X^{n}(k_{n}(s)))\, ds+\int_{0}^{t\wedge T_{R}}\sigma_{W}(k_{n}(s),X^{n}(k_{n}(s)))\, dW(s)\\
 &  & +\int_{0}^{t\wedge T_{R}}\sigma_{H}(k_{n}(s),X^{n}(k_{n}(s)))\, dB^{H}(s).
\end{eqnarray*}
We obtain for any integer $N\geq1$ 
\begin{lem}
\label{lem:tight} Suppose that Assumptions $\mathbf{(H.1)}$ and $\mathbf{(H.2)}$ hold. Then, for all $n\in\mathbb{N}$, $N\in\mathbb{N}^*$ and $R>0$,  there exists a positive constant $C_{N,R}$ such that
\begin{equation}
\sup_{t\in[0,T]}\mathbb{E}\left[\|X_{R}^{n}\|_{\alpha,t}^{2N}\right]\leq C_{N,R}. \label{app est}
\end{equation}
Moreover, we also have for all $s,t\in[0,T]$,
\begin{equation}
\mathbb{E}\left[\left|X_{R}^{n}(t)-X_{R}^{n}(s)\right|^{2N}\right]\leq C_{N,R}\left|t-s\right|^{N}. \label{app inc est}
\end{equation}
\end{lem}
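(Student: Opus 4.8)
The plan is to prove the two bounds in order, deriving \eqref{app inc est} from \eqref{app est}. The moment bound \eqref{app est} is obtained by repeating, essentially verbatim, the argument of Lemma~\ref{lem: norm est}. Writing the equation for $X_R^n$ and using convexity of $x\mapsto x^{2N}$, I split $\mathbb{E}[\|X_R^n\|_{\alpha,t}^{2N}]$ into the contributions $|x_0|^{2N}+A_1+A_2+A_3$ of the drift, Brownian and fractional integrals. The only new feature compared with Lemma~\ref{lem: norm est} is that the coefficients are frozen at the grid, i.e.\ evaluated at $(k_n(s),X^n(k_n(s)))$. Since $k_n(s)\le s$, one has $|X^n(k_n(s))|\le\|X_R^n\|_{\alpha,s}$, so the linear growth conditions $\mathbf{(H.1.1)}$ and $\mathbf{(H.1.3)}$ still give bounds of the form $K(1+\|X_R^n\|_{\alpha,s})$, and this is precisely what keeps the resulting constant uniform in $n$. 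The same tools then apply: H\"older's inequality together with $\alpha<1/2$ for $A_1$, the Burkh\"older and H\"older inequalities for $A_2$, and Proposition~\ref{prop:Nua Ras 2} (jj) for $A_3$. This produces the weakly singular integral inequality
\[
\mathbb{E}\big[\|X_R^n\|_{\alpha,t}^{2N}\big]\le C_{N,R}+C_{N,R}\int_0^t\varphi(s,t)\,\mathbb{E}\big[\|X_R^n\|_{\alpha,s}^{2N}\big]\,ds,\qquad \varphi(s,t)=s^{-\alpha}+(t-s)^{-\alpha-1/2},
\]
and the Gronwall-type lemma (Lemma~7.6 in \cite{NR}) yields \eqref{app est} with a constant independent of $n$.

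For the increment bound \eqref{app inc est}, I would fix $s<t$ and write
\[
X_R^n(t)-X_R^n(s)=\int_{s\wedge T_R}^{t\wedge T_R}b\,du+\int_{s\wedge T_R}^{t\wedge T_R}\sigma_W\,dW(u)+\int_{s\wedge T_R}^{t\wedge T_R}\sigma_H\,dB^H(u),
\]
with the integrands frozen at the grid as above, and estimate the $2N$-th moment of each term. The drift term is handled by H\"older's inequality, giving a factor $|t-s|^{2N-1}\int_s^t\mathbb{E}|b|^{2N}\,du$; the Brownian term by the Burkh\"older and then H\"older inequalities, giving $|t-s|^{N-1}\int_s^t\mathbb{E}|\sigma_W|^{2N}\,du$. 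In both cases the linear growth of the coefficients and the already-proved bound \eqref{app est} reduce the remaining integral to $C_{N,R}$, so these contributions are of order $|t-s|^{2N}$ and $|t-s|^{N}$ respectively, hence both are $\le C_{N,R}|t-s|^{N}$ since $|t-s|\le T$.

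The fractional integral is the delicate term and the place I expect the main difficulty. On $[s\wedge T_R,t\wedge T_R]$ one has $\|B^H\|_{1-\alpha,\infty}\le R$ by definition of $T_R$, so the fractional estimate \eqref{frac int est} (equivalently Proposition~\ref{prop:Nua Ras 1}) bounds the term by $R$ times the $\alpha$-fractional seminorm over $[s,t]$ of the frozen integrand $u\mapsto\sigma_H(k_n(u),X^n(k_n(u)))$. The care needed is twofold: this integrand is only piecewise constant, so the double-integral part of its seminorm must be controlled through the boundedness of $\partial_{x_i}\sigma_H$, the $\beta$-H\"older time regularity in $\mathbf{(H.2)}$ (using $\alpha<\beta$) and the H\"older regularity of the increments of $X^n$, whose moments are finite by \eqref{app est}; and the whole estimate must stay uniform in $n$, which again follows from $k_n(u)\le u$ and \eqref{app est}. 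Carrying this out yields a factor of order $(t-s)^{1-\alpha}$ in the pathwise bound, so the $2N$-th moment is of order $(t-s)^{2N(1-\alpha)}$. Here the hypothesis $\alpha<1/2$ is decisive: it gives $2N(1-\alpha)>N$, so this contribution is once more $\le C_{N,R}|t-s|^{N}$. Combining the three estimates gives \eqref{app inc est}.
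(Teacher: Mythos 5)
Your proposal follows essentially the same route as the paper: the same decomposition into drift, It\^o and fractional contributions, the same tools (H\"older, Burkh\"older, Proposition~\ref{prop:Nua Ras 2}~(jj), the Gronwall-type lemma) for \eqref{app est}, and for \eqref{app inc est} the same three-term increment estimate with the fractional term controlled via \eqref{frac int est} to the exponent $2N(1-\alpha)>N$. The only minor imprecision is the claim $|X^n(k_n(s))|\le\|X_R^n\|_{\alpha,s}$, which should read $|X_R^n(k_n(s))|\le\sup_{0\le u\le s}\|X_R^n\|_{\alpha,u}$; this is exactly how the paper closes the Gronwall argument, so the proof is unaffected.
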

\begin{proof}
It follows from the convexity of $x^{2N}$ that 
\begin{eqnarray*}
\mathbb{E}\left[\|X_{R}^{n}\|_{\alpha,t}^{2N}\right] & \leq & C_{N}\left\{ \left|x_{0}\right|^{2N}+\mathbb{E}\left[\left\|\int_{0}^{\cdot\wedge T_{R}}b(k_{n}(s),X^{n}(k_{n}(s)))\, ds\right\|_{\alpha,t}^{2N}\right]\right.\\
 &  & +\mathbb{E}\left[\left\|\int_{0}^{\cdot\wedge T_{R}}\sigma_{W}(k_{n}(s),X^{n}(k_{n}(s)))dW(s)\right\|_{\alpha,t}^{2N}\right]\\
 &  & +\left.\mathbb{E}\left[\left\|\int_{0}^{\cdot\wedge T_{R}}\sigma_{H}(k_{n}(s),X^{n}(k_{n}(s)))dB^{H}(s)\right\|_{\alpha,t}^{2N}\right]\right\} \\
 & = & C_{N}\left(\left|x_{0}\right|^{2N}+I_{1}+I_{2}+I_{3}\right).
\end{eqnarray*}
Using the same estimations as in the proof of Lemma \ref{lem: norm est}, we obtain
\begin{eqnarray*}
I_{1} &  \leq & C_{N}\left(1+\int_{0}^{t}\mathbb{E}\left[\left|X^{n}(k_{n}(s)\wedge T_{R})\right|^{2N}\right]ds\right)\\
&  \leq & C_{N}\left(1+\int_{0}^{t}\mathbb{E}\left[\left|X_{R}^{n}(k_{n}(s))\right|^{2N}\right]ds\right).
\end{eqnarray*}
\begin{eqnarray*}
I_{2} & \leq & C_{N}\mathbb{E}\left[\left|\int_{0}^{t\wedge T_{R}}\sigma_{W}(k_{n}(s),X^{n}(k_{n}(s)))\,dW(s)\right|^{2N}\right] \\ 
&  & + C_{N}\mathbb{E}\left[\left(\int_{0}^{t}{(t-s)^{-\alpha-1}}{\left|\int_{s\wedge T_{R}}^{t\wedge T_{R}}\sigma_{W}(k_{n}(s),X^{n}(k_{n}(s)))\, dW(u)\right|}\, ds\right)^{2N}\right]\\
\\
 & = & I_{21}+I_{22}.
\end{eqnarray*}
For $I_{21}$, using the linear growth assumption in $\mathbf{(H.1.3)}$, Burkh{\"o}lder's
and H{\"o}lder's inequalities, we obtain 

\begin{eqnarray*}
I_{21} & \leq & C_{N}\mathbb{E}\left[\int_{0}^{t\wedge T_{R}}\left|\sigma_{W}(k_{n}(s),X^{n}(k_{n}(s)))\right|^{2N}ds\right]\\
\\
 & \leq & C_{N}\mathbb{E}\left[\int_{0}^{t}\left|\sigma_{W}(k_{n}(s)\wedge T_{R},X^{n}(k_{n}(s)\wedge T_{R}))\right|^{2N}ds\right]\\
\\
 & \leq & C_{N}\left(1+\int_{0}^{t}\mathbb{E}\left[\left|X_{R}^{n}(k_{n}(s))\right|^{2N}\right]\, ds\right).
\end{eqnarray*}
For $I_{22}$ , again the Burkh{\"o}lder and H{\"o}lder inequalities give 
\begin{eqnarray*}
I_{22}   & 
\leq &  C_{N}\left(\int_{0}^{t}\dfrac{ds}{(t-s)^{\alpha+\frac{1}{2}}}\right)^{2N-1}  \\ 
&& \times \int_{0}^{t}{(t-s)^{-\alpha-\frac{1}{2}-N}}{\mathbb{E}\left[\left|\int_{s\wedge T_{R}}^{t\wedge T_{R}}\sigma_{W}(k_{n}(s),X^{n}(k_{n}(s)))\, dW(u)\right|^{2N}\right]}ds\\
 & \leq & C_{N}\int_{0}^{t}{(t-s)^{-\alpha-\frac{3}{2}}}{\mathbb{E}\left[\int_{s\wedge T_{R}}^{t\wedge T_{R}}\left|\sigma_{W}(k_{n}(s),X^{n}(k_{n}(s)))\right|^{2N}du\right]} ds\\
 & \leq & C_{N}\int_{0}^{t}(t-s)^{-\alpha-\frac{3}{2}}}{\mathbb{E}\left[\int_{s}^{t}\left|\sigma_{W}(k_{n}(u)\wedge T_{R},X^{n}(k_{n}(u)\wedge T_{R}))\right|^{2N}du\right]ds.
\end{eqnarray*}
Applying now Fubini\textquoteright s theorem and using the growth
assumption in $\mathbf{(H.1.3)}$, we obtain
\[
I_{22}\leq C_{N}\int_{0}^{t}(t-s)^{-\alpha-\frac{1}{2}}\left(1+\mathbb{E}\left[\left|X_{R}^{n}(k_{n}(s))\right|^{2N}\right]\right)ds.
\]
Thus 
\[
I_{2}\leq C_{N}\left(1+\int_{0}^{t}(t-s)^{-\alpha-\frac{1}{2}}\mathbb{E}\left[\left|X_{R}^{n}(k_{n}(s))\right|^{2N}\right]ds\right).
\]

Let us remark that

\begin{eqnarray}
& &\int_{0}^{t\wedge T_{R}}\sigma_{H}(k_{n}(s)),X^{n}(k_{n}(s))))\, dB^{H}(s) \label{frac int stopped} \\
 && =  \int_{0}^{t}\sigma_{H}(k_{n}(s)\wedge T_{R},X^{n}(k_{n}(s)\wedge T_{R}))\, dB^{H}(s\wedge T_{R})\nonumber\\\notag
& & = \int_{0}^{t}\sigma_{H}(k_{n}(s)\wedge T_{R},X_{R}^{n}(k_{n}(s)))\, dB^{H}(s\wedge T_{R})  
\end{eqnarray}

Using (\ref{frac int stopped}) and Proposition \ref{prop:Nua Ras 2} (jj) in the Appendix we obtain 
\[
I_{3}\leq C_{N}R^{2N}\left(\int_{0}^{t} \left((t-s)^{-2\alpha}+s^{-\alpha}\right) \big(1+\mathbb{E}\left[\|X_{R}^{n}(k_{n}(\cdot))\|_{\alpha,s}\right]\big)ds\right)^{2N}.
\]
By Hölder's inequality  we have
\[
I_{3}\leq C_{N}R^{2N}\int_{0}^{t} \varphi(s,t) \left(1+\mathbb{E}\left[\|X_{R}^{n}(k_{n}(\cdot))\|_{\alpha,s}^{2N}\right]\right)\, ds.
\]
Putting all the estimates obtained for $I_{1}$, $I_{2}$ and $I_{3}$ together, we obtain
\begin{equation} \label{eq:est esp}
\mathbb{E}\left[\|X_{R}^{n}\|_{\alpha,t}^{2N}\right] \leq C_{N}\left|x_{0}\right|^{2N}+C_{N}\left(1+R^{2N}\right)
\int_{0}^{t}\varphi(s,t)\mathbb{E}\left[\|X_{R}^{n}(k_{n}(\cdot))\|_{\alpha,s}^{2N}\right]\, ds. 
\end{equation}
Therefore, since the right hand side of Equation (\ref{eq:est esp})
is an increasing function of $t$, we have
\begin{equation*}
\sup_{0\leq s\leq t}\mathbb{E}\left[\|X_{R}^{n}\|_{\alpha,s}^{2N}\right]  \leq  C_{N}\left|x_{0}\right|^{2N}+C_{N}\left(1+R^{2N}\right)\\
\int_{0}^{t}\varphi(s,t)\mathbb{E}\left[\underset{0\leq u\leq s}{\sup}\|X_{R}^{n}\|_{\alpha,u}^{2N}\right]\, ds.
\end{equation*}
As a consequence, by the Gronwall type lemma (cf. Lemma 7.6 in \cite{NR}), we deduce the  first estimate (\ref{app est}) of the lemma. Let us now prove the second estimate (\ref{app inc est}). We have 
\begin{eqnarray*}
&& X_{R}^{n}(t)- X_{R}^{n}(s) \\ && =  \int_{s\wedge T_{R}}^{t\wedge T_{R}}b(k_{n}(r),X^{n}(k_{n}(r)))\, dr+\int_{s\wedge T_{R}}^{t\wedge T_{R}}\sigma_{W}(k_{n}(r),X^{n}(k_{n}(r)))\, dW(r)\\
 &  & \quad+\int_{s\wedge T_{R}}^{t\wedge T_{R}}\sigma_{H}(k_{n}(r),X^{n}(k_{n}(r)))\, dB^{H}(r).
\end{eqnarray*}
Therefore
\begin{eqnarray*}
 \mathbb{E}\left[\left|X_{R}^{n}(t)-X_{R}^{n}(s)\right|^{2N}\right]   &  \leq & C_{N}\left\{ \mathbb{E}\left[\left|\int_{s\wedge T_{R}}^{t\wedge T_{R}}b(k_{n}(r),X^{n}(k_{n}(r)))\, dr\right|^{2N}\right]\right.\\
 &  & +\mathbb{E}\left[\left|\int_{s\wedge T_{R}}^{t\wedge T_{R}}\sigma_{W}(k_{n}(r),X^{n}(k_{n}(r)))\, dW(r)\right|^{2N}\right]\\
 &  & +\left.\mathbb{E}\left[\left|\int_{s\wedge T_{R}}^{t\wedge T_{R}}\sigma_{H}(k_{n}(r),X^{n}(k_{n}(r)))\, dB^{H}(r)\right|^{2N}\right]\right\} \\
 & = & C_{N}\left(J_{1}+J_{2}+J_{3}\right).
\end{eqnarray*}
Applying H{\"o}lder's inequality, the growth assumption ({\bf{H.1.1}}) and (\ref{app est}), we have
\begin{eqnarray*}
J_{1}   &  \leq & \mathbb{E}\left[\left(\int_{s}^{t}\left|b(k_{n}(r)\wedge T_{R},X^{n}(k_{n}(r)\wedge T_{R}))\right|\, dr\right)^{2N}\right]\\
 & \leq & C_{N} (t-s)^{2N-1}\int_{s}^{t}\mathbb{E}\left[\left|b(k_{n}(r)\wedge T_{R},X_{R}^{n}(k_{n}(r)))\right|^{2N}\right] dr\\
 & \leq  & C_{N} (t-s)^{2N}.
\end{eqnarray*}
By the H{\"o}lder and Burkh{\"o}lder inequalities and using (\ref{app est}), we obtain
\begin{eqnarray*}
J_{2}   &  \leq & C_{N} (t-s)^{N-1}\mathbb{E}\left[\int_{s\wedge T_{R}}^{t\wedge T_{R}}\left|\sigma_{W}(k_{n}(r),X^{n}(k_{n}(r)))\right|^{2N}\, dr\right] \\
 & \leq & C_{N} (t-s)^{N-1}\mathbb{E}\left[\int_{s}^{t}\left|\sigma_{W}(k_{n}(r)\wedge T_{R},X_{R}^{n}(k_{n}(r)))\right|^{2N}\, dr\right]  \\
 & \leq &   C_{N} (t-s)^{N}.
\end{eqnarray*}
Let us note that we obtain from (\ref{frac int est}) and the H{\"o}lder inequality
\[
\left|\int_{s}^{t}f(u)\, dB^{H}(u)\right|^{2N}\leq C_{N}R^{2N}(t-s)^{2N(1-\alpha)+2\alpha-1}\int_{s}^{t}\dfrac{\|f(r)\|_{\alpha}^{2N}}{(r-s)^{2\alpha}}\, dr.
\]
Combining this estimate and (\ref{frac int stopped}) we obtain
\begin{eqnarray*}
J_{3}   &  \leq & C_{N}R^{2N}(t-s)^{2N(1-\alpha)+2\alpha-1}\mathbb{E}\left[\int_{s}^{t}\dfrac{\|\sigma_{H}(k_{n}(r)\wedge T_{R},X_{R}^{n}(k_{n}(r)))\|_{\alpha}^{2N}}{(r-s)^{2\alpha}}\, dr\right].
\end{eqnarray*}
Using the Hölder inequality, assumption ({\bf{H.2}}) and  (\ref{app est}), we arrive at
\begin{eqnarray*}
J_{3}   &  \leq & C_{N}R^{2N}(t-s)^{2N(1-\alpha)+2\alpha-1}\mathbb{E}\left[\int_{s}^{t}\dfrac{1+\|X_{R}^{n}(k_{n}(r)))\|_{\alpha}^{2N}}{(r-s)^{2\alpha}}\, dr\right] \\  \\
 & \leq &   C_{N} (t-s)^{N}.
\end{eqnarray*}
All these estimates allow us to obtain
\[
\mathbb{E}\left[\left|X_{R}^{n}(t)-X_{R}^{n}(s)\right|^{2N}\right]\leq C_{N,R}\left|t-s\right|^{N}.
\]
The proof of Lemma \ref{lem:tight} is then completed.
\end{proof}
Now we are able to give the convergence result.
\begin{thm}\label{thm:Eulerstrong}
Assume that $\sigma_{W}$ and $b$ are continuous satisfying the linear
growth condition. Suppose moreover that $\sigma_{H}$ satisfies the assumption $\mathbf{(H.2)}$
and that for Equation (\ref{eq:1-1}) the pathwise uniqueness holds.
Then Euler's approximations $X^{n}(t)$ converge to a process $X(t)$
in probability, uniformly in $t$ in $[0,T]$. Furthermore $X(t)$
is the unique strong solution of Equation (\ref{eq:1-1}). \end{thm}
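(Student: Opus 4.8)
The plan is to combine the uniform estimates of Lemma~\ref{lem:tight} with a tightness argument, the Skorokhod representation theorem, and the Gy\"{o}ngy--Krylov characterization of convergence in probability \cite{GK}, using the assumed pathwise uniqueness to identify the limit. Fixing $R>0$, I would first work with the localized approximations $X_{R}^{n}$. The moment bound (\ref{app est}) together with the increment estimate (\ref{app inc est}) and Kolmogorov's continuity criterion show that the family of laws of $X_{R}^{n}$ on $C([0,T];\mathbb{R}^{n})$ is tight; since (\ref{app inc est}) holds for every $N$, one even obtains uniform H\"{o}lder moments and hence tightness in $C_{0}^{\mu}$ for $\mu$ slightly below $1/2$. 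As the law of the pair $(W,B^{H})$ is fixed and $B^{H}$ lives in the Polish space $C_{0}^{\mu}$ for $\mu<H$, the joint laws of $(X_{R}^{n},W,B^{H})$ form a tight family.

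Next I would invoke the Gy\"{o}ngy--Krylov lemma: to prove that $X_{R}^{n}$ converges in probability it suffices to show that for any two subsequences the joint sequence $(X_{R}^{n_{k}},X_{R}^{m_{k}},W,B^{H})$ admits a further subsequence whose law converges to a limit supported on the diagonal $\{x=y\}$. Tightness provides such a weakly convergent subsequence, and by the Skorokhod representation theorem I realize it on a new probability space $(\tilde{\Omega},\tilde{\mathcal{F}},\tilde{P})$ through random elements $(\tilde{X},\tilde{Y},\tilde{W},\tilde{B}^{H})$ converging almost surely, where $\tilde{W}$ is a Brownian motion and $\tilde{B}^{H}$ a fractional Brownian motion with respect to the filtration generated by the limiting processes.

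The crucial step is to pass to the limit in the integral equation and to show that both $\tilde{X}$ and $\tilde{Y}$ solve Equation~(\ref{eq:1-1}) driven by $\tilde{W}$ and $\tilde{B}^{H}$. For the drift and the It\^{o} term, the continuity of $b$ and $\sigma_{W}$, the linear growth, and the uniform moment bounds (which supply uniform integrability) permit passage to the limit in $L^{2}$, the It\^{o} integral converging because the integrands converge in $L^{2}(dt\otimes d\tilde{P})$ along the almost surely convergent subsequence. The delicate point, and the main obstacle, is the fractional integral $\int_{0}^{\cdot}\sigma_{H}(k_{n}(s),X^{n}(k_{n}(s)))\,dB^{H}(s)$, whose convergence is not implied by mere continuity. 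I would control it through the continuity of the fractional integral operator in the norms of Proposition~\ref{prop:Nua Ras 1}, using that under $\mathbf{(H.2)}$ the map $x\mapsto\sigma_{H}(t,x)$ and its derivative are Lipschitz; the almost sure convergence of $\tilde{X}_{R}^{n_{k}}$ in the H\"{o}lder norm $\|\cdot\|_{\alpha,t}$, together with the uniform bound $\|B^{H}\|_{1-\alpha,\infty,t}\le R$ before $T_{R}$, then yields convergence of the corresponding fBm integrals.

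Once $(\tilde{X},\tilde{W},\tilde{B}^{H})$ and $(\tilde{Y},\tilde{W},\tilde{B}^{H})$ are identified as two weak solutions of Equation~(\ref{eq:1-1}) on the same probability space, the assumed pathwise uniqueness forces $\tilde{X}=\tilde{Y}$, so the limiting law is concentrated on the diagonal. The Gy\"{o}ngy--Krylov lemma then yields that $X_{R}^{n}$ converges in probability, uniformly in $t$, to a process $X_{R}$. Letting $R\to\infty$ and using that $T_{R}=T$ almost surely for $R$ large enough (as at the end of the proof of Theorem~\ref{pathunique}), I obtain a process $X$ to which $X^{n}$ converges in probability, uniformly on $[0,T]$. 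Since $X$ is the limit in probability of the $\mathcal{F}_{t}$-adapted approximations and satisfies Equation~(\ref{eq:1-1}) with the original $W$ and $B^{H}$, it is a strong solution, and its uniqueness follows at once from the assumed pathwise uniqueness.
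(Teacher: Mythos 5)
Your proposal is correct and follows essentially the same route as the paper: tightness of the Euler scheme from Lemma~\ref{lem:tight}, Skorokhod embedding of pairs of subsequences, passage to the limit in the three integrals (with the fBm integral handled via the fractional-norm estimates), identification of the limits by the assumed pathwise uniqueness, and the Gy\"{o}ngy--Krylov lemma to upgrade to convergence in probability. The only cosmetic difference is that you apply Gy\"{o}ngy--Krylov to the localized processes $X_{R}^{n}$ and remove the localization at the end, whereas the paper first deduces tightness of the unlocalized $X^{n}$ from $\limsup_{R\to\infty}P[T_{R}\leq T]=0$ and delocalizes before invoking the lemma.
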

\begin{proof}
Fix $\eta<1/2$. We have from (\ref{app inc est}) in Lemma \ref{lem:tight} that $X_{R}^{n}$
is weakly relatively compact in $C_{0}^{\eta}$ for every $R$. We
want to deduce from this the weak compactness in $C_{0}^{\eta}$ of
$X^{n}$. Clearly it suffices to show that 
\[
\limsup_{R\rightarrow\infty}P\left[T_{R}\leq T\right]=0.
\]
This is a consequence of that fact that the random variable $\left\Vert B^{H}\right\Vert _{1-\alpha,\infty,t}$
has moments of all orders (see Lemma 7.5 in \cite{NR}). We now
take two subsequences $X^{l},X^{m}$ of the Euler's approximations
$X^{n}$. Then obviously $\left(X^{l},X^{m}\right)$ is a tight family
of processes in $C_{0}^{\eta}\times C_{0}^{\eta}$. By Skorokhod's
embedding theorem there exist a probability space $\big(\tilde{\Omega},\tilde{\mathcal{F}},\tilde{P}\big)$
and a sequence $\big(\tilde{X}^{l,n},\tilde{X}^{m,n},\tilde{B}^{n},\tilde{W}^{n}\big)$
with values in $C_{0}^{\eta}$ such that
\begin{enumerate}
\item The law of $\big(\tilde{X}^{l,n},\tilde{X}^{m,n},\tilde{B}^{n},\tilde{W}^{n}\big)$
and $\left(X^{l},X^{m},B^{H},W\right)$ coincide for every $n\in\mathbb{N}$.
\item There exist a subsequence $\big(\tilde{X}^{l(j)},\tilde{X}^{m(j)},\tilde{B}^{n(j)},\tilde{W}^{n(j)}\big)$
converging in $C_{0}^{\eta}$ to $\big(\hat{X},\hat{Y},\hat{B},\hat{W}\big)$
uniformly in $t$, $\tilde{P}$ a.s., that is 
\[
\lim_{j\rightarrow\infty}\Big(\|\tilde{X}^{m(j)}-\hat{X}\|_{\eta}+\|\tilde{X}^{\textcolor{magenta}{l}(j)}-\hat{Y}\|_{\eta}+\|\tilde{B}^{n(j)}-\hat{B}\|_{\eta}+\|\tilde{W}^{n(j)}-\hat{W}\|_{\eta}\Big)=0.
\]
\end{enumerate}
We obtain from Lemma 3.1 in  G\"{y}ongy and Krylov \cite{GK} and the convergence
of integrals with respect to fBms (5.7) in
Guerra and Nualart \cite{GN} that 
\begin{eqnarray*}
\lim_{j\rightarrow\infty} \int_{0}^{t}b\big(k_{l(j)}(s),\tilde{X}^{l(j)}(k_{l(j)}(s))\big)\, ds &=& \int_{0}^{t}b\big(s,\hat{X}(s)\big)\, ds\\
\lim_{j\rightarrow\infty}  \int_{0}^{t}\sigma_{W}\big(k_{l(j)}(s),\tilde{X}^{l(j)}(k_{l(j)}(s))\big)\,d\tilde{W}^{n(j)}(s) &=& \int_{0}^{t}\sigma_{W}\big(s,\hat{X}(s)\big)\,d\hat{W}(s)\\
\lim_{j\rightarrow\infty} \int_{0}^{t}\sigma_{H}\big(k_{l(j)}(s),\tilde{X}^{l(j)}(k_{l(j)}(s))\big)\,d\tilde{B}^{n(j)}(s) &=& \int_{0}^{t}\sigma_{H}\big(s,\hat{X}(s)\big)\,d\hat{B}(s),
\end{eqnarray*}
and 
\begin{eqnarray*}
\lim_{j\rightarrow\infty} \int_{0}^{t}b\big(k_{m(j)}(s),\tilde{X}^{m(j)}(k_{m(j)}(s))\big)\, ds &=& \int_{0}^{t}b\big(s,\hat{Y}(s)\big)\, ds\\
\lim_{j\rightarrow\infty} \int_{0}^{t}\sigma_{W}\big(k_{m(j)}(s),\tilde{X}^{m(j)}(k_{m(j)}(s))\big)\, d\tilde{W}^{n(j)}(s) &=& \int_{0}^{t}\sigma_{W}\big(s,\hat{Y}(s)\big)\, d\hat{W}(s)\\
\lim_{j\rightarrow\infty} \int_{0}^{t}\sigma_{H}\big(k_{m(j)}(s),\tilde{X}^{m(j)}(k_{m(j)}(s))\big)\, d\tilde{B}^{n(j)}(s) &=& \int_{0}^{t}\sigma_{H}\big(s,\hat{Y}(s)\big)\, d\hat{B}(s),
\end{eqnarray*}
in probability, and uniformly in $t\in[0,T]$. Therefore, the processes
$\hat{X},\hat{Y}$ satisfy the same SDE~\eqref{eq:1-1}, on $(\tilde{\Omega},\tilde{\mathcal{F}},\tilde{P})$,
with the driving noises $\hat{W}$, $\hat{B}$ and the initial condition
$x_{0}$ on the time interval $[0,\hat{T}_{R})$ with

\[
\hat{T}_{R}:=\inf\big\{ t\geq0,\| \hat{B}\| _{1-\alpha,\infty,t}\geq R\big\} \wedge T,\quad R>0.
\]

Again, as above, we have a.s.~$\hat{T}_{R}=T$ for all $R$
large enough. So that $\hat{X},\hat{Y}$ satisfy the same SDE~\eqref{eq:1-1}, on $[0,T]$. Then by pathwise
uniqueness, we conclude that $\hat{X}(t)=\tilde{Y}(t)$ for all $t\in[0,T]$
$\tilde{P}$ a.s.. Hence, by applying Lemma \ref{lem:Gyo Kry}
in the Appendix we obtain the convergence of Euler's approximations
$X^{n}(t)$ to a process $X(t)$ in probability, uniformly in $t$
in $[0,T]$. Therefore, $\left\{ X(t),\, t\in[0,T]\right\} $ satisfy Equation (\ref{eq:1-1}).
\end{proof}
As a consequence we obtain the following existence result.
\begin{thm}
Assume that $b$, $\sigma_{W}$ and  $\sigma_{H}$ satisfy the hypotheses $\mathbf{(H.1)}-\mathbf{(H.2)}$.
If $1-H<\alpha<\min\left(\beta/2,1\right)$, then the Equation (\ref{eq:1-1}) has a unique strong solution.
\end{thm}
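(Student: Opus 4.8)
The plan is to obtain this statement as a direct assembly of the two main results already established, namely the pathwise uniqueness theorem (Theorem~\ref{pathunique}) and the convergence of the Euler scheme (Theorem~\ref{thm:Eulerstrong}). This is exactly the Yamada--Watanabe mechanism invoked in the introduction: once pathwise uniqueness holds and a weak solution exists, one automatically gets a unique strong solution. So there is essentially nothing new to compute; the work is in checking that the hypotheses of both earlier theorems are consequences of $\mathbf{(H.1)}$--$\mathbf{(H.2)}$ together with the stated range of $\alpha$.

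First I would reconcile the range of $\alpha$. Since the H\"older exponent satisfies $\beta\in(0,1]$, we have $\beta/2\leq 1/2\leq 1$, so $\min(\beta/2,1)=\beta/2$ and the hypothesis $1-H<\alpha<\min(\beta/2,1)$ reads $1-H<\alpha<\beta/2$. In particular $\alpha<\beta/2\leq\min(\beta,1/2)$, so the weaker requirement $1-H<\alpha<\min(\beta,1/2)$ of Theorem~\ref{pathunique} is met, and that theorem yields pathwise uniqueness for Equation~\eqref{eq:1-1} under $\mathbf{(H.1)}$ and $\mathbf{(H.2)}$. Next I would observe that the coefficients inherit exactly the regularity Theorem~\ref{thm:Eulerstrong} asks for: by $\mathbf{(H.1)}$ the maps $b$ and $\sigma_W$ are continuous and satisfy the linear growth bounds $\mathbf{(H.1.1)}$ and $\mathbf{(H.1.3)}$, while $\sigma_H$ satisfies $\mathbf{(H.2)}$. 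With pathwise uniqueness in hand, Theorem~\ref{thm:Eulerstrong} then applies and produces, as the in-probability and uniform-in-$t$ limit of the Euler approximations $X^{n}$, a process $X$ that is a strong solution of Equation~\eqref{eq:1-1}. This settles existence, and uniqueness is immediate: any two strong solutions constitute two weak solutions on the same filtered space driven by the same $W$ and $B^{H}$, hence are indistinguishable by the pathwise uniqueness already proved.

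The only genuine point to watch is the calibration of $\alpha$ carried out in the second step. The stronger constraint $\alpha<\beta/2$, rather than merely $\alpha<\min(\beta,1/2)$, is precisely what the Euler convergence argument consumes through the convergence of the fractional integrals borrowed from Guerra and Nualart; and one must tacitly assume the stated interval is nonempty, i.e.\ that $1-H<\beta/2$, which is a standing compatibility condition between the Hurst parameter $H$ and the time-regularity exponent $\beta$ of $\sigma_H$. I expect this bookkeeping of the admissible exponents, rather than any analytic difficulty, to be the main thing requiring care, since all the hard estimates have already been absorbed into Theorems~\ref{pathunique} and~\ref{thm:Eulerstrong}.
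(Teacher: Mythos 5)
Your proposal is correct and matches the paper's intended argument: the theorem is stated there as an immediate consequence of Theorem~\ref{pathunique} and Theorem~\ref{thm:Eulerstrong}, with no separate proof given, and your exponent bookkeeping (that $1-H<\alpha<\min(\beta/2,1)$ implies $1-H<\alpha<\min(\beta,1/2)$, so pathwise uniqueness applies and then the Euler scheme delivers the strong solution) is exactly the assembly the authors have in mind.
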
 

\section*{Appendix}

In this appendix, we recall some results which play a great
role in this work.  We also show a technical lemma that have been used in the proof of pathwise uniqueness.
We begin with some a priori estimates from the paper of Nualart and Rascanu \cite{NR}.
\begin{prop}
\label{prop:Nua Ras 1}We have 
\begin{multline*}
(i)\,\|\int_{0}^{.}f(s)\, ds\|_{\alpha,t} \leq  C\int_{0}^{t}\dfrac{\left|f(s)\right|}{(t-s)^{\alpha}}\, ds.\\
(ii)\,\|\int_{0}^{.}f(s)\, dB^{H}(s)\|_{\alpha,t} \leq C\left\Vert B^{H}\right\Vert _{1-\alpha,\infty,t}\int_{0}^{t}\left((t-s)^{-2\alpha}+s^{-\alpha}\right)\|f\|_{\alpha,s}\, ds.
\end{multline*}
\end{prop}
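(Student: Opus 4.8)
The plan is to unwind the norm $\|\cdot\|_{\alpha,t}$ into its two constituent pieces, the pointwise value and the Hölder-type integral, and to bound each separately in both (i) and (ii). For (i), write $F(t)=\int_{0}^{t}f(u)\,du$. The value term satisfies $|F(t)|\le\int_{0}^{t}|f(u)|\,du$, and since $(t-u)^{\alpha}\le T^{\alpha}$ on $[0,t]$ this is at most $T^{\alpha}\int_{0}^{t}|f(u)|(t-u)^{-\alpha}\,du$. For the Hölder term I would use $|F(t)-F(s)|\le\int_{s}^{t}|f(u)|\,du$, insert it, and swap the order of integration: the region $\{0\le s\le u\le t\}$ yields
\[
\int_{0}^{t}\frac{|F(t)-F(s)|}{(t-s)^{\alpha+1}}\,ds\le\int_{0}^{t}|f(u)|\Big(\int_{0}^{u}(t-s)^{-\alpha-1}\,ds\Big)\,du,
\]
and the inner integral equals $\alpha^{-1}\big((t-u)^{-\alpha}-t^{-\alpha}\big)\le\alpha^{-1}(t-u)^{-\alpha}$. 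Summing the two contributions gives (i) with $C=T^{\alpha}+\alpha^{-1}$.

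For (ii), set $G(t)=\int_{0}^{t}f\,dB^{H}$. The value term is handled directly by the already-established estimate \eqref{frac int est}, namely $|G(t)|\le\Gamma(\alpha)^{-1}\|B^{H}\|_{1-\alpha,\infty,t}\|f\|_{\alpha,1,t}$. I then convert $\|f\|_{\alpha,1,t}$ into the desired form by observing that its inner double integral equals $\int_{0}^{t}\big(\|f\|_{\alpha,s}-|f(s)|\big)\,ds$, so that $\|f\|_{\alpha,1,t}\le\int_{0}^{t}s^{-\alpha}\|f\|_{\alpha,s}\,ds+\int_{0}^{t}\|f\|_{\alpha,s}\,ds\le(1+T^{\alpha})\int_{0}^{t}s^{-\alpha}\|f\|_{\alpha,s}\,ds$, which produces exactly the $s^{-\alpha}$ contribution. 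For the increment $G(t)-G(s)=\int_{s}^{t}f\,dB^{H}$ I would apply the fractional integration-by-parts formula \eqref{eq:frac int} on the subinterval $[s,t]$, bound $\sup_{s<r<t}|D_{t-}^{1-\alpha}B^{H}_{t-}(r)|\le\Gamma(\alpha)^{-1}\|B^{H}\|_{1-\alpha,\infty,t}$, and estimate $\int_{s}^{t}|D_{s+}^{\alpha}f(r)|\,dr$. Writing out the Weyl--Marchaud derivative and using $|f(r)|\le\|f\|_{\alpha,r}$ together with $\int_{s}^{r}|f(r)-f(v)|(r-v)^{-\alpha-1}\,dv\le\|f\|_{\alpha,r}$ gives $\int_{s}^{t}|D_{s+}^{\alpha}f(r)|\,dr\le C\int_{s}^{t}(r-s)^{-\alpha}\|f\|_{\alpha,r}\,dr$, hence $|G(t)-G(s)|\le C\|B^{H}\|_{1-\alpha,\infty,t}\int_{s}^{t}(r-s)^{-\alpha}\|f\|_{\alpha,r}\,dr$.

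It remains to feed this into the Hölder term and carry out the final kernel computation, which is where I expect the only real difficulty. After inserting the increment bound and swapping the order of integration over $\{0\le s\le r\le t\}$, the Hölder term becomes $C\|B^{H}\|_{1-\alpha,\infty,t}\int_{0}^{t}\|f\|_{\alpha,r}\,I(r)\,dr$ with $I(r)=\int_{0}^{r}(t-s)^{-\alpha-1}(r-s)^{-\alpha}\,ds$. The crux is to show $I(r)\le C(t-r)^{-2\alpha}$. Substituting $w=r-s$ and setting $a=t-r$ turns this into $\int_{0}^{r}(a+w)^{-\alpha-1}w^{-\alpha}\,dw$; splitting at $w=a$ (using $a+w\ge a$ for $w\le a$ and $a+w\ge w$ for $w\ge a$) bounds it by $Ca^{-2\alpha}$ when $a\le r$, while the crude bound $(a+w)^{-\alpha-1}\le a^{-\alpha-1}$ together with $(r/a)^{1-\alpha}\le1$ gives the same $Ca^{-2\alpha}$ when $a>r$. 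This is precisely the origin of the exponent $-2\alpha$ in the statement, and the computation requires $\alpha<1/2$ for the resulting outer integral to be finite. Combining the $s^{-\alpha}$ term arising from $|G(t)|$ with the $(t-r)^{-2\alpha}$ term from the Hölder part yields (ii).
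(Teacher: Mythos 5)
Your proof is correct, and since the paper states Proposition~\ref{prop:Nua Ras 1} without proof, quoting it from Nualart and Rascanu \cite{NR}, your argument is essentially a reconstruction of the proof in that reference: the same splitting of $\|\cdot\|_{\alpha,t}$ into the value term and the H\"older-type integral, the estimate \eqref{frac int est} together with the identity $\int_{0}^{t}\int_{0}^{s}|f(s)-f(y)|(s-y)^{-\alpha-1}\,dy\,ds=\int_{0}^{t}\big(\|f\|_{\alpha,s}-|f(s)|\big)\,ds$ for the value term, the fractional integration by parts on $[s,t]$ with $\sup_{s<r<t}|D_{t-}^{1-\alpha}B^{H}_{t-}(r)|\le\Gamma(\alpha)^{-1}\|B^{H}\|_{1-\alpha,\infty,t}$ for the increment, and the Fubini swap leading to the kernel bound $\int_{0}^{r}(t-s)^{-\alpha-1}(r-s)^{-\alpha}\,ds\le C(t-r)^{-2\alpha}$, which indeed holds for all $\alpha\in(0,1)$ (one can also see it in one line as $B(1-\alpha,2\alpha)(t-r)^{-2\alpha}$ via the substitution $w=(t-r)x$), the standing restriction $\alpha<1/2$ entering only, as you note, through integrability of the resulting kernel.
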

Moreover, under the linear growth assumption,  we have from Nualart and Rascanu \cite{NR}, the following
\begin{prop}
\label{prop:Nua Ras 2}Assume $(H.1)$ and $(H.2)$. The following
estimates hold
\begin{multline*}
(j)\,\|\int_{0}^{.}b(s,f(s))\, ds\|_{\alpha,t} \leq C\left(\int_{0}^{t}\dfrac{\left|f(s)\right|}{(t-s)^{\alpha}}\, ds+1\right)\\
(jj)\,\|\int_{0}^{.}\sigma_{H}(s,f(s))\, dB^{H}(s)\|_{\alpha,t} \leq  C\left\Vert B^{H}\right\Vert _{1-\alpha,\infty,t}\int_{0}^{t}\left((t-s)^{-2\alpha}+s^{-\alpha}\right)\left(1+\|f\|_{\alpha,s}\right)\, ds
\end{multline*}
\end{prop}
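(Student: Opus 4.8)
The plan is to derive both estimates from the purely deterministic bounds of Proposition~\ref{prop:Nua Ras 1} together with the growth and regularity hypotheses $\mathbf{(H.1)}$ and $\mathbf{(H.2)}$, so that the work reduces to two composition estimates. No fresh probabilistic input is required: the randomness enters only through the factor $\|B^H\|_{1-\alpha,\infty,t}$ already isolated in Proposition~\ref{prop:Nua Ras 1}~$(ii)$.

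For part $(j)$ I would apply Proposition~\ref{prop:Nua Ras 1}~$(i)$ with the integrand replaced by $s\mapsto b(s,f(s))$, which gives
\[
\Big\|\int_0^\cdot b(s,f(s))\,ds\Big\|_{\alpha,t}\leq C\int_0^t\frac{|b(s,f(s))|}{(t-s)^\alpha}\,ds.
\]
The linear growth bound $\mathbf{(H.1.1)}$, namely $|b(s,f(s))|\leq K(1+|f(s)|)$, splits the right-hand side into $CK\int_0^t(t-s)^{-\alpha}|f(s)|\,ds$ plus $CK\int_0^t(t-s)^{-\alpha}\,ds$; the latter equals $CK\,t^{1-\alpha}/(1-\alpha)\leq CK\,T^{1-\alpha}/(1-\alpha)$, a constant since $\alpha<1/2<1$. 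This is exactly the asserted bound $(j)$.

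For part $(jj)$ the key step is to control the $\|\cdot\|_{\alpha,s}$-norm of the composed map $h(r):=\sigma_H(r,f(r))$ by $C(1+\|f\|_{\alpha,s})$; once this is in hand, Proposition~\ref{prop:Nua Ras 1}~$(ii)$ applied with integrand $h$, together with this bound inserted under the integral, yields $(jj)$ at once. To establish the composition estimate I would first bound the value: since $\partial_x\sigma_H$ is bounded by $\mathbf{(H.2.1)}$, $\sigma_H$ is Lipschitz in $x$ uniformly in $t$, so $|\sigma_H(s,x)|\leq|\sigma_H(s,0)|+K|x|\leq C(1+|x|)$, where $\sup_s|\sigma_H(s,0)|<\infty$ follows from the $\beta$-Hölder continuity in $t$ of $\mathbf{(H.2.3)}$; hence $|h(s)|\leq C(1+|f(s)|)$. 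For the increment I would split
\[
h(s)-h(u)=\big(\sigma_H(s,f(s))-\sigma_H(u,f(s))\big)+\big(\sigma_H(u,f(s))-\sigma_H(u,f(u))\big),
\]
bounding the first bracket by $K|s-u|^\beta$ via $\mathbf{(H.2.3)}$ and the second by $K|f(s)-f(u)|$ via the uniform Lipschitz property in $x$. Substituting into the definition of $\|h\|_{\alpha,s}$ produces two integrals: $\int_0^s(s-u)^{\beta-\alpha-1}\,du$, finite and bounded by a constant precisely because $\alpha<\beta$, and $\int_0^s(s-u)^{-\alpha-1}|f(s)-f(u)|\,du$, which is dominated by $\|f\|_{\alpha,s}$. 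Collecting terms and using $|f(s)|\leq\|f\|_{\alpha,s}$ gives $\|h\|_{\alpha,s}\leq C(1+\|f\|_{\alpha,s})$.

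The main obstacle is this composition estimate in $(jj)$: one must verify that the $\alpha$-norm is stable under composing $\sigma_H$ with $f$, and this is exactly where the full force of $\mathbf{(H.2)}$ is used — the bounded spatial derivative controls the $x$-increment, while the $\beta$-Hölder regularity in time with $\beta>\alpha$ is what makes the time-increment integral $\int_0^s(s-u)^{\beta-\alpha-1}\,du$ converge. The remaining steps are elementary Beta-type integral evaluations, so once the composition bound is secured both $(j)$ and $(jj)$ follow by direct substitution into Proposition~\ref{prop:Nua Ras 1}.
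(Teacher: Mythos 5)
Your proof is correct. Note that the paper itself offers no proof of this proposition: it is recalled verbatim from Nualart and Rascanu \cite{NR}, and your argument is essentially the standard derivation given there --- part $(j)$ by inserting the linear growth bound $\mathbf{(H.1.1)}$ into Proposition~\ref{prop:Nua Ras 1}~$(i)$, and part $(jj)$ by the composition estimate $\|\sigma_H(\cdot,f(\cdot))\|_{\alpha,s}\leq C\left(1+\|f\|_{\alpha,s}\right)$ fed into Proposition~\ref{prop:Nua Ras 1}~$(ii)$, with the increment split into a time part controlled by $\mathbf{(H.2.3)}$ and a space part controlled by the Lipschitz property coming from $\mathbf{(H.2.1)}$. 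The one point worth making explicit is that the finiteness of $\int_0^s (s-u)^{\beta-\alpha-1}\,du$ requires $\alpha<\beta$; this is not stated in the proposition itself but is the paper's standing assumption $1-H<\alpha<\min\left(\beta,1/2\right)$, so your proof is complete under the hypotheses actually in force throughout the paper.
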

We recall the following characterization of the convergence in probability
in term of weak convergence, see G\"{y}ongy and Krylov \textcolor{red}{\cite{GK}}. 
\begin{lem}
\label{lem:Gyo Kry}Let $(Z_{n})_{n\in\mathbb{N}}$ be a sequence
of random elements in a Polish space $(\mathcal{E},d)$ equipped with
the Borel $\sigma$-algebra. Then $(Z_{n})_{n\in\mathbb{N}}$ converges
in probability to an $\mathcal{E}$-valued random element if and only
if for every pair of subsequences $(Z_{m})_{m\in\mathbb{N}}$ and
$(Z_{k})_{k\in\mathbb{N}}$ there exists a subsequence $(Z_{m(p)},Z_{k(p)})_{p\in\mathbb{N}}$
converging weakly to a random element $v$ supported on the diagonal
$\left\{ (x,y)\in\mathcal{E}\times\mathcal{E}:x=y\right\} $.
\end{lem}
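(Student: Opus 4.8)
The plan is to reduce the entire statement to the elementary fact that convergence in probability of $\mathcal{E}$-valued random elements is metrized by
\[
\rho(X,Y):=\mathbb{E}\big[d(X,Y)\wedge 1\big],
\]
together with the observation that, since $(\mathcal{E},d)$ is Polish, the space of (equivalence classes of) $\mathcal{E}$-valued random elements equipped with $\rho$ is complete. Consequently $(Z_n)$ converges in probability to \emph{some} limit if and only if it is $\rho$-Cauchy. With this reformulation the two implications become short, and essentially all of the content sits in the ``if'' direction.

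For the ``only if'' direction I would argue directly. Suppose $Z_n\to Z$ in probability. Given any two subsequences $(Z_m)$ and $(Z_k)$, both still converge in probability to $Z$, so the pair $(Z_m,Z_k)$ converges in probability, hence in law, to $(Z,Z)$. The limiting law $v$ is the law of $(Z,Z)$, which is plainly supported on the diagonal $\{(x,y):x=y\}$; here one need not even pass to a further subsequence. This settles the forward implication.

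For the ``if'' direction I would argue by contradiction. If $(Z_n)$ does not converge in probability, then by completeness of the space of $\mathcal{E}$-valued random elements under $\rho$ it fails to be $\rho$-Cauchy, so there exist $\varepsilon>0$ and strictly increasing indices $m_p,k_p\to\infty$ with $\rho(Z_{m_p},Z_{k_p})\ge\varepsilon$ for all $p$. Applying the hypothesis to the two subsequences $(Z_{m_p})_p$ and $(Z_{k_p})_p$ produces a further subsequence along which $(Z_{m_{p(q)}},Z_{k_{p(q)}})$ converges weakly to some $v$ supported on the diagonal. Since $g(x,y):=d(x,y)\wedge 1$ is bounded and continuous on $\mathcal{E}\times\mathcal{E}$, weak convergence yields $\mathbb{E}\big[g(Z_{m_{p(q)}},Z_{k_{p(q)}})\big]\to\int g\,dv$; but $v$ charges only the set where $x=y$, on which $g\equiv 0$, so $\int g\,dv=0$. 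Hence $\rho(Z_{m_{p(q)}},Z_{k_{p(q)}})\to 0$, contradicting $\rho(Z_{m_p},Z_{k_p})\ge\varepsilon$. Therefore $(Z_n)$ is $\rho$-Cauchy and converges in probability to some $\mathcal{E}$-valued random element.

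The main obstacle is not a single estimate but the correct packaging of the equivalences: identifying $\rho$ as a \emph{complete} metric for convergence in probability, so that the a priori unknown limit is eliminated and ``convergence to some limit'' is replaced by the limit-free Cauchy property; and recognizing that diagonal support of the weak limit is precisely what forces the bounded continuous test function $d(x,y)\wedge 1$ to integrate to zero. Once these two observations are in place the contradiction is immediate, and the only care needed is bookkeeping the two nested layers of subsequence extraction.
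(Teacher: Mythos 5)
Your proof is correct, but note that the paper itself offers no proof of this lemma: it is stated as a recalled result with a citation to G\"{y}ongy and Krylov \cite{GK}, so there is no internal argument to compare against. Measured against the original source, your argument is essentially the standard one --- the forward direction handled directly by noting that $(Z_m,Z_k)\to(Z,Z)$ in probability and hence in law, and the converse by reducing ``convergence to an unknown limit'' to the Cauchy property for the complete metric $\rho(X,Y)=\mathbb{E}[d(X,Y)\wedge 1]$ and deriving a contradiction along the hypothesized diagonal weak limit. Your one genuine variant is the closing step: where Gy\"{o}ngy--Krylov apply the portmanteau theorem to the closed set $\{(x,y):d(x,y)\geq\varepsilon\}$ (which carries $v$-measure zero), you instead integrate the bounded continuous test function $g=d\wedge 1$ against the weak limit; the two are interchangeable, and yours is marginally cleaner since it avoids invoking portmanteau for non-open/closed level sets. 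Two small points worth making explicit: first, completeness of the space of random elements under $\rho$ requires $d$ to be a \emph{complete} metric --- if one reads ``Polish'' as merely completely metrizable, you should replace $d$ by an equivalent complete metric, which is harmless because both convergence in probability (via the a.s.-subsequence characterization) and weak convergence are topological notions; second, separability is what guarantees that $d(X,Y)$ is a random variable at all, so it is silently used in defining $\rho$. With those caveats recorded, the two nested subsequence extractions are handled correctly (your indices $m_p,k_p$ can indeed be chosen strictly increasing by the inductive choice you indicate), and the proof is complete.
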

Finally, let us give a version of the Bihari's lemma.
\begin{lem}
\label{lem:Bihari}Let $1/2<\alpha<1$ and $c\geq0$ be fixed and
$f:\left[0,\infty\right)\longrightarrow\left[0,\infty\right)$ be
a continuous function such that 
\[
f(t)\leq a+bt^{\alpha}\int_{0}^{t}(t-s)^{-\alpha}s^{-\alpha}\varrho\left(f(s)\right)\, ds.
\]
where
$\varrho$ is a concave increasing function from $\mathbb{R}_{+}$
to $\mathbb{R}_{+}$ such that $\varrho(0)=0$, $\varrho(u)>0$ for
$u>0$ and satisfying \eqref{eq:rho est} for some $q>1$. Then for any $1<p<2$ such that $\alpha<1/p$ and $q>1$ with $1/p+1/q=1$
we have 
\[
f(t)\leq\left[F^{-1}\left(F(2^{q-1}a^{q})+2^{q-1}b^{q}\, C_{\alpha,p}^{q/p}t^{q\left((1/p)-\alpha\right)+1}\right)\right]^{1/q},
\]
for all $t\in\left[0,T\right]$ such that
\[
F(2^{q-1}a^{q})+2^{q-1}b^{q}\, C_{\alpha,p}^{q/p}t^{q\left((1/p)-\alpha\right)+1}\in Dom(F^{-1}),
\]
where 
\[
F(x)=\int_{1}^{x}\dfrac{du}{\varrho^{q}(u^{1/q})},\quad for\, x\geq0,
\]
and $F^{-1}$ is the inverse function of $F$. In particular, if moreover,
$a=0$ then $f(t)=0$ for all $0<t<T$.\end{lem}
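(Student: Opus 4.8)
The plan is to reduce the weakly singular nonlinear inequality to a standard Bihari comparison in three steps. First I would apply Hölder's inequality with the conjugate exponents $p,q$ to the kernel $(t-s)^{-\alpha}s^{-\alpha}$, separating it from $\varrho(f(s))$:
\[
\int_0^t (t-s)^{-\alpha}s^{-\alpha}\varrho(f(s))\,ds \le \left(\int_0^t (t-s)^{-\alpha p}s^{-\alpha p}\,ds\right)^{1/p}\left(\int_0^t \varrho^q(f(s))\,ds\right)^{1/q}.
\]
Here the hypothesis $\alpha<1/p$ is exactly what guarantees $\alpha p<1$, so that the first integral is a convergent Beta integral; the scaling $s=tu$ evaluates it as $C_{\alpha,p}^{1/p}\,t^{(1/p)-2\alpha}$ with $C_{\alpha,p}=\int_0^1(1-u)^{-\alpha p}u^{-\alpha p}\,du$. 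Multiplying by the prefactor $b\,t^{\alpha}$ collapses the powers of $t$ and yields
\[
f(t)\le a+b\,C_{\alpha,p}^{1/p}\,t^{(1/p)-\alpha}\left(\int_0^t \varrho^q(f(s))\,ds\right)^{1/q}.
\]

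Next I would raise both sides to the power $q$ and use the convexity estimate $(x+y)^q\le 2^{q-1}(x^q+y^q)$, which linearises the dependence on the integral term for $g:=f^q$. Setting $\Psi(u):=\varrho^q(u^{1/q})$, which is nondecreasing because $\varrho$ is, this reads
\[
g(t)\le 2^{q-1}a^q+2^{q-1}b^q C_{\alpha,p}^{q/p}\,t^{q((1/p)-\alpha)}\int_0^t \Psi(g(s))\,ds.
\]
The exponent simplifies to $q((1/p)-\alpha)=q(1-\alpha)-1$, so after integrating the coefficient against $dt$ the driving term carries the power $t^{q((1/p)-\alpha)+1}$ that appears in the statement. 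The third and main step is the Bihari comparison itself: the right-hand side above is nondecreasing in $t$, so $g$ is dominated by the solution of the associated scalar ODE, and writing $F(x)=\int_1^x du/\varrho^q(u^{1/q})=\int_1^x du/\Psi(u)$ one has $(F^{-1})'=\Psi\circ F^{-1}$, whence integrating the differential inequality produces precisely
\[
g(t)\le F^{-1}\!\left(F(2^{q-1}a^q)+2^{q-1}b^q C_{\alpha,p}^{q/p}\,t^{q((1/p)-\alpha)+1}\right),
\]
valid wherever the argument lies in $\mathrm{Dom}(F^{-1})$; taking $q$-th roots recovers the stated bound for $f=g^{1/q}$. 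The delicate point here is the time-dependent coefficient $t^{q((1/p)-\alpha)}$ sitting \emph{outside} the integral: I would control it by monotonicity, bounding $g$ by the increasing majorant $W(t):=2^{q-1}a^q+2^{q-1}b^qC_{\alpha,p}^{q/p}\,t^{q((1/p)-\alpha)}\int_0^t\Psi(g(s))\,ds$ and carefully tracking the extra term generated when one differentiates the power of $t$. This bookkeeping, together with the convergence of the Beta integral forced by $\alpha<1/p$, is where the real work lies.

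Finally, the uniqueness consequence ($a=0\Rightarrow f\equiv 0$), which is what Theorem~\ref{pathunique} actually invokes, is exactly where hypothesis \eqref{eq:rho est} becomes indispensable. When $a=0$ the argument of $F^{-1}$ reduces to $F(0)+2^{q-1}b^qC_{\alpha,p}^{q/p}\,t^{q((1/p)-\alpha)+1}$, and $F(0)=-\int_0^1 du/\varrho^q(u^{1/q})=-\infty$ by \eqref{eq:rho est}; since $F$ is increasing onto an interval whose left endpoint is $-\infty$, its inverse satisfies $\lim_{y\to-\infty}F^{-1}(y)=0$, so that $g(t)\le 0$ and hence $f(t)=0$ for all $0<t<T$. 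I expect the main obstacle to be the honest execution of the comparison step --- both the reduction of the outside coefficient $t^{q((1/p)-\alpha)}$ to the integrated form and the justification of the ODE domination --- after which the Osgood-type divergence \eqref{eq:rho est} does the essential work of collapsing the bound to zero.
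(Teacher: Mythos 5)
Your proposal is correct and follows essentially the same route as the paper: H\"older's inequality with exponents $p,q$ to peel off the Beta-type kernel $(t-s)^{-p\alpha}s^{-p\alpha}=C_{\alpha,p}t^{1-2p\alpha}$ after scaling, then raising to the power $q$ with $(x+y)^q\le 2^{q-1}(x^q+y^q)$, and finally a Bihari/Osgood comparison for $g=f^q$ with $\Psi(u)=\varrho^q(u^{1/q})$. The only difference is one of detail: where the paper simply cites Bihari's lemma, you correctly identify and address the one nonstandard feature --- the coefficient $t^{q((1/p)-\alpha)}$ sitting outside the integral --- whose reduction to the integrated form $t^{q((1/p)-\alpha)+1}$ via monotonicity of the majorant is exactly the bookkeeping needed to make the citation legitimate.
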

\begin{proof}
Let $1<p<2$ such that $\alpha<1/p$. Using the H{\"o}lder inequality
we obtain
\[
f(t)\leq a+bt^{\alpha}\left(\int_{0}^{t}(t-s)^{-p\alpha}s^{-p\alpha}\, ds\right)^{1/p}\left(\int_{0}^{t}\varrho^{q}\left(f(s)\right)\, ds\right)^{1/q}
\]
For the first integral, using $s=tu$, we have the estimate
\[
\int_{0}^{t}(t-s)^{-p\alpha}s^{-p\alpha}\, ds=t^{1-2p\alpha}\int_{0}^{1}(1-u)^{-p\alpha}u^{-p\alpha}\, du=C_{\alpha,p}t^{1-2p\alpha}
\]
where $C_{\alpha,p}=B\left(1-p\alpha,1-p\alpha\right)$ is the beta function. It follows
that 
\[
f(t)\leq a+b\, C_{\alpha,p}^{1/p}t^{(1/p)-\alpha}\left(\int_{0}^{t}\varrho^{q}\left(f(s)\right)\, ds\right)^{1/q}.
\]
This yields
\[
f^{q}(t)\leq2^{q-1}a^{q}+2^{q-1}b^{q}\, C_{\alpha,p}^{q/p}t^{q\left((1/p)-\alpha\right)}\int_{0}^{t}\varrho^{q}\left(f(s)\right)\, ds.
\]
Then it follows from Bihari's Lemma, see \cite{Bihari56}, that 
\[
f(t)\leq\left[F^{-1}\left(F(2^{q-1}a^{q})+2^{q-1}b^{q}\, C_{\alpha,p}^{q/p}t^{q\left((1/p)-\alpha\right)+1}\right)
\right]^{1/q},
\]
for all such $t\in\left[0,T\right]$ such that
\[
F(2^{q-1}a^{q})+2^{q-1}b^{q}\, C_{\alpha,p}^{q/p}t^{q\left((1/p)-\alpha\right)+1}\in \mathrm{Dom}(F^{-1}).
\]
Now, it is simple to see from (\ref{eq:rho est}) that if $a=0$ then
$f(t)=0$ for $t\in[0,T]$.
\end{proof}

\end{document}